\newtheorem{thrm}{Theorem}[section]
\newtheorem{lem}[thrm]{Lemma}
\newtheorem{prop}[thrm]{Proposition}
\newtheorem{cor}[thrm]{Corollary}
\theoremstyle{definition}
\newtheorem{definition}[thrm]{Definition}
\newtheorem{question}[thrm]{Question}
\newtheorem{remark}[thrm]{Remark}
\newtheorem{example}[thrm]{Example}
\newtheorem{nt}[thrm]{Notation}
\newtheorem{discussion}[thrm]{Discussion}
\numberwithin{equation}{section}
\newcommand{\Ass}{\operatorname{Ass}}
\newcommand{\Spec}{\operatorname{Spec}}
\newcommand{\ara}{\operatorname{ara}}
\newcommand{\cd}{\operatorname{cd}}
\newcommand{\zd}{\operatorname{zd}}
\newcommand{\Ht}{\operatorname{ht}}
\newcommand{\pd}{\operatorname{pd}}
\newcommand{\Ext}{\operatorname{Ext}}
\newcommand{\Tor}{\operatorname{Tor}}
\newcommand{\Hom}{\operatorname{Hom}}
\newcommand{\depth}{\operatorname{depth}}
\newcommand{\vpl}{\operatornamewithlimits{\varprojlim}}
\newcommand{\vil}{\operatornamewithlimits{\varinjlim}}
\newcommand{\fm}{\mathfrak{m}}
\newcommand{\fp}{\mathfrak{p}}
\newcommand{\fa}{\mathfrak{a}}
\newcommand{\fb}{\mathfrak{b}}
\newcommand{\fc}{\mathfrak{c}}
\newcommand{\fn}{\mathfrak{n}}
\author[A.\,F.\,Boix]{Alberto F.\,Boix}
\address{Department of Applied Mathematics, Science, Materials Engineering and Electronic Technology, Universidad Rey Juan Carlos, C/Tulip\'an, s/n, 28933, M\'ostoles, Spain.}
\email{alberto.fboix@urjc.es}
\email{albertof.boix@gmail.com}
\author{Majid Eghbali}
\thanks{A. F. Boix was partially supported by the Spanish Ministry of Science and Innovation grant PID2019--104844GB--I00.}
\address{Department of Mathematics, Tafresh University, Tafresh 39518 79611, Iran.} \email{m.eghbali@tafreshu.ac.ir} \email{m.eghbali@yahoo.com}
\keywords{Local cohomology, ring endomorphism, set-theoretically
Cohen-Macaulay ideals, linkage, cohomological dimension, projective
dimension.}
\subjclass[2020]{13D45, 14B15.}
\begin{document}

\title[Local cohomology, set-theoretically C-M ]
{Vanishing of local cohomology and set-theoretically Cohen-Macaulay
ideals $$\text{To\ the\ soul\ of\ professor\ Wolfgang\ Vogel}$$}

\begin{abstract}
In this paper, first, we generalize a result of Peskine-Szpiro on
the relation between the cohomological dimension and projective
dimension. Then, we give conditions for the vanishing of local
cohomology from local to global and vice versa. Our final goal in
the present paper is to examine the set-theoretically Cohen-Macaulay
ideals to find some cohomological characterization of these kinds of
ideals.
\end{abstract}

 \maketitle

\section{INTRODUCTION}\label{introduction}

Throughout this paper, all rings are commutative and Noetherian with identity.
For an ideal $I$ of a local ring $(R,\fm)$, the local
cohomology modules $H^i_I(R)$ may be considered as the isomorphism
$H^i_I(R) = \vil \Ext^i_R(R/I^t,R) \text{\ for\ } i\geq 0$. Moreover, herefater $\sqrt{I}$ will denote the radical of the ideal $I.$ On the other hand, given $(R,\fm)$ a local ring, $E$ will denote the injective hull of the residue field of $R,$ and $(-)^{\vee}$ will denote the Matlis duality functor $\Hom_R (-,E).$


In their landmark paper, Peskine and Szpiro \cite[Proposition 4.1]{PS} proved that
whenever $(R, \fm)$ is a regular local ring containing a field of positive characteristic and $I \subset R$ is
a Cohen-Macaulay ideal (i.e. the ring $R/I$ is Cohen-Macaulay),
$H^i_{I}(R)=0 \text{\ for\ all \ } i > \dim R-\depth R/I$. An
immediate implication of it is that the inequality
$\cd(R,I) \leq \pd R/I$ holds, where cohomological dimension $\cd(R,
I)$ resp. projective dimension $\pd R/I$ of $I$ is defined as
$$\cd(R, I) = \min \{i : H^j_I(R) = 0 \text{\ for\ all\ } j > i\},$$
resp.
$$\pd R/I=\sup \{ n|\ \Ext^i_R(R/I,N)=0,\ \text{for\ all\ } R-\text{mod.\ } N\ \text{and\ all\  }i \geq n+1 \}.$$

Let $\varphi: R\rightarrow R$ be a ring endomorphism, it induces a natural $\varphi$ action on the local cohomology modules $\varphi_{\ast}: H^i_{I}(R) \rightarrow H^i_{\varphi(I)R}(R)$ via $\varphi (r) \varphi_{\ast} (\eta) =\varphi_{\ast} (r \eta)$, where $r \in R, \eta \in H^i_{I}(R), \sqrt{I}=\sqrt{\varphi(I)R}$ which is an endomorphism of the underlying Abelian group (details, including notation, are given in Section \ref{notation and remarks}). Inspired of \cite{PS} and \cite{Lyu06}, as a generalization
of Frobenius action, the mentioned action is an effective tool in the study of local cohomology modules as it has been used recently in \cite{Al,B-E,EABR,S-W}.  

In Section \ref{on a result of Peskine-Szpiro}, (cf. Theorem \ref{PS Generalize}) we generalize the inequality $\cd (R,I)\leq\dim (R)-\depth R/I$ using the action of $\varphi$ on local cohomology (without any restriction on the characteristic of the ring).

Recall that $I$ is called a set-theoretically complete intersection ideal if there are $h=height (I):=\Ht (I)$ elements $g_1, \ldots , g_s \in R$ such that they generate an ideal which has the same radical as $I;$ in other words, $\Ht (I)= \ara (I),$ where $\ara (I)$ denotes the arithmetic rank of $I$. It is known \cite[3.3.4]{BroSha} that, in general, $\Ht (I) \leq \cd (R,I) \leq \ara (I)$, and, borrowing terminology from \cite{HellusSchenzel}, $I$ is called cohomologically complete intersection, whenever  $\Ht
(I) = \cd (R,I)$. Notice that a set-theoretically complete intersection ideal is a cohomologically complete intersection one, but the converse is no longer true.

From the other point of view, the aforementioned result of  Peskine and Szpiro says
 that $I$ is cohomologically complete intersection, whenever it is a perfect ideal.
  Recently,  Varbaro has shown that their idea works for all Noetherian rings
  of positive characteristic (cf. \cite[Corollary 2.2]{Var}).
   On the other hand, a conjecture of Hartshorne on the relation between being a set-theoretically complete intersection curve in $\mathbb{P}^3_k$ over a field $k$ of characteristic $0$ and Cohen-Macaulayness of its coordinate ring (see Discussion \ref{Hartshorne}), motivated us to give the following Theorem as our main result in Section \ref{on a result of Peskine-Szpiro}:

\begin{thrm} (cf. Theorem \ref{CM cond}) Let $(R,\fm)$ be a Gorenstein local ring, let $I\subseteq R$ be an ideal, and let $\varphi:\ R\rightarrow R$ be a ring endomorphism. Suppose that
\begin{enumerate}[(a)]
\item $\varphi$ has the going down property;
\item the ideals $\{\varphi^t (I)R\}_t$ form a descending chain cofinal wil $\{I^t\}_t.$
 \end{enumerate}
 Then the following statements are equivalent.
\begin{itemize}
  \item[(1)] $R/I$ is a Cohen-Macaulay ring.
  \item[(2)] $\Ht(I)=\cd(R,I)$ and $\Ext^r_R(R/\varphi^{t}(I)R,R) \longrightarrow
  \Ext^r_R(R/\varphi^{t+1}(I)R,R)$ are injective of nonzero $R$-modules for all $t$ and $r:=\depth R/I$.
  \end{itemize}
\end{thrm}


Second part of the paper is devoted to the study of set-theoretically Cohen-Macaulay ideals. Our very concrete motivation is the following question asked in \cite{S-W2}.

\begin{question} Given an affine variety $V$, does $V$ support a Cohen-Macaulay scheme, i.e. whether there exists a Cohen-Macaulay ring $R$ such that $V$ is isomorphic to $\Spec R_{red}$?
\end{question}

Let $X$ be a Cohen-Macaulay scheme. It fails to be true that $X_{red},$ the corresponding
 reduced scheme, is Cohen-Macaulay. In general, Singh and Walther defined an ideal $I$ in a regular local ring to be
\textit{set-theoretically Cohen-Macaulay} if there exists an ideal
$J \subset R$ with $\sqrt{I}=\sqrt{J}$ such that the ring $R/J$ is
Cohen-Macaulay. It is evident that set-theoretically complete
intersection and Cohen-Macaulay radical ideals are set-theoretically Cohen-Macaulay but the converse is no longer true. It is a natural question to ask on the relation between
Cohen-Macaulay and set-theoretically Cohen-Macaulay ideals.

Our first result in this direction is Proposition \ref{equivalence},
which is a consequence of Theorem \ref{CM cond}. Under the assumptions given in Proposition \ref{equivalence}, 
the concepts of Cohen-Macaulayness and set-theoretically Cohen-Macaulayness are
the same.

Not so much is known about the cohomological characterization of
set-theoretically Cohen-Macaulay ideals. In Section \ref{set theoretically C-M ideals}, among other results we show
that under certain conditions,  set-theoretically Cohen-Macaulay
ideals are the same as cohomologically complete intersections.

\begin{thrm} (cf. Theorem \ref{setCM-setCI}) Let $(R,\fm)$ be a  regular local ring, let $I\subseteq R$ be an ideal, and let $\varphi:\ R\rightarrow R$ be a ring endomorphism. Suppose that $\varphi$ is flat, and there is an ideal $J\subseteq R$ with $\sqrt{I}=\sqrt{J}$ such that $R/J$ is Cohen-Macaulay and $\{\varphi^t (J)R\}_t$ is cofinal with respect to $\{I^t\}_t.$ Then, one has $\Ht(I)=\cd (R,I)$.
\end{thrm}

Whenever the action of $\varphi$ on $R/\sqrt{I}$ is pure and flat, the equivalence between the cohomologically complete intersection
and set-theoretically Cohen-Macaulay ideals in Theorem \ref{setCM-setCI} holds, see Corollary \ref{equi.setCM-setCI}.

Another way to study set-theoretically Cohen-Macaulayness of a variety is through
linkage theory. Roughly speaking, the linkage is the study of two
varieties where their union has nice properties. Through the use of this
concept one may consider a variety linked with the second one
which one understands better.

 We cite the following
paragraph from a fruitful paper of A. Martsinkovsky and J. R.
Strooker \cite{Ma-St} on the significance of the concept of linkage
theory.

\emph{ 'It goes back to the late 19th and early 20th century, when
M. Noether, Halphen, and Severi used it to study algebraic curves in
$\mathbb{P}^3$. Linkage allows to pass from a given curve to another
curve, related in a geometric way to the original one. Iterating the
procedure one obtains a whole series of curves in the same linkage
class. The usefulness of this technique is explained by two
observations: (a) certain properties of the curve are preserved
under linkage, and (b) the resulting curves may be simpler, and thus
easier to handle, than the original one.'}

In the same vein, Peskine and Szpiro in \cite[Proposition
1.3]{P-S74} proved that in a Gorenstein local ring if $I$ and $J$
are linked ideals and $I$ is a Cohen-Macaulay ideal then so is $J$.
The same result was proved by Schenzel in \cite[Corollary 3.3]{Sch82}
for Buchsbaum ideals. In this direction, in Corollary \ref{setlink}
we give conditions where the set-theoretically Cohen-Macaulay
property can be shared between two linked ideals.

\section{NOTATIONS AND REMARKS}\label{notation and remarks}

We start this section with the idea used by Singh and Walther in \cite{S-W}.

Let $A$ be a commutative Noetherian ring with a flat endomorphism
$\varphi: A \rightarrow A$, and let $\fa$ be an ideal of $A$. We
denote by $\varphi_{\ast}A$ the following $(A,A)$-bimodule: for any
$r, r_1, r_2 \in A$,
$$r_1.(\varphi_{\ast}r).r_2:=\varphi_{\ast}(\varphi (r_1)rr_2).$$

Let $\Phi$ be the functor on the category of $A$-modules with $\Phi(M)=\varphi_{\ast} A \otimes_A M$.
The iteration $\Phi^t$ is the functor
$$\Phi^t(M)=\varphi_{\ast} A \otimes_A \Phi^{t-1}M,\ t \geq 1,$$
where $\Phi^0$ is interpreted as the identity functor; the reader will easily note that the flatness of $\varphi$ is equivalent to the exactness of $\Phi$. At once, one can realize that $\Phi^t(M)=\varphi^t_{\ast} A \otimes_A M$.

Let us notice that $\Phi(A) \cong A$ given by $\varphi_{\ast}r' \otimes r \mapsto \varphi (r)r'$. Furthermore, if $M$ and $N$ are $A$-modules, then \cite[2.6.1]{S-W} there are natural isomorphisms
$$\Phi(\Ext^i_A(M,N)) \cong \Ext^i_A(\Phi(M), \Phi(N)),\ \text{\ for\ all\ } i \geq 0.$$
Assume, in addition, that the ideals $\{\varphi^t(\fa)R\}_{t\geq 0}$ form a descending chain cofinal
with the chain $\{\fa^t\}_{t\geq 0}$, where ${\fa}$ is an ideal of $A$. Under these assumptions, one can easily check \cite[page 291]{S-W} that
$$\Phi(H^i_{\fa}(A)) \cong H^i_{\varphi(\fa)}(A)\cong H^i_{\fa}(A),\ \text{\ for\ all\ } i \geq 0.$$

Now, let us fix our notations. Throughout this section, let $R$ be a  commutative Noetherian local
 ring and $\varphi:R \longrightarrow R$ be a local ring endomorphism. Suppose that $I$ is an ideal of
$R$ and $\{\varphi^t(I)R\}_{t \geq 0}$ is a decreasing chain of ideals cofinal with $\{I^t\}_{t \geq 0}$. Keeping these assumptions, we introduce the following notations.

\begin{nt} Let  $\varphi:R \longrightarrow R$ be a flat ring endomorphism (a ring map
  $R \longrightarrow S$ is called flat if $S$ is flat as an $R$-module). In this case, we say that triple $(R, I, \varphi)$ has (Flat) property.
\end{nt}

Next result can be found in \cite[Exercise 7.1]{Mat}.

\begin{lem} \label{Matsumura} Let $A,B$ be two rings. If $B$ is a faithfully flat $A$-algebra
 then for an $A$-module $M$ one has $B \otimes_A M$ is $B$-flat if and only if $M$ is $A$-flat.
\end{lem}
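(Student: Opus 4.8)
The plan is to prove the two implications separately, with essentially all of the content living in the descent direction. For the \emph{forward} implication, I would assume $M$ is $A$-flat and take an arbitrary injection $N' \hookrightarrow N$ of $B$-modules. Using the canonical isomorphism $(B \otimes_A M) \otimes_B N \cong M \otimes_A N$, valid for every $B$-module $N$ after restriction of scalars, I identify the induced map $(B \otimes_A M) \otimes_B N' \to (B \otimes_A M) \otimes_B N$ with $M \otimes_A N' \to M \otimes_A N$. The latter is injective because $M$ is $A$-flat and $N' \hookrightarrow N$ is in particular an injection of $A$-modules, so $B \otimes_A M$ is $B$-flat. This direction does not use faithful flatness, only flatness of $M$.

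For the \emph{converse}, the faithfully flat descent of flatness, I would suppose $B \otimes_A M$ is $B$-flat and take an arbitrary injection $N' \hookrightarrow N$ of $A$-modules, writing $K$ for the kernel of $M \otimes_A N' \to M \otimes_A N$; the goal is $K = 0$. Since $B$ is flat over $A$, applying the exact functor $B \otimes_A -$ shows that $B \otimes_A K$ is the kernel of the map obtained after tensoring up. The key step is the natural isomorphism $B \otimes_A (M \otimes_A N) \cong (B \otimes_A M) \otimes_B (B \otimes_A N)$ (and likewise for $N'$), which rewrites that tensored map as $(B \otimes_A M) \otimes_B (B \otimes_A N') \to (B \otimes_A M) \otimes_B (B \otimes_A N)$. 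Flatness of $B$ over $A$ makes $B \otimes_A N' \hookrightarrow B \otimes_A N$ an injection of $B$-modules, and $B$-flatness of $B \otimes_A M$ forces this last map to be injective, so $B \otimes_A K = 0$. Finally I invoke faithful flatness: a faithfully flat base change detects nonvanishing, whence $B \otimes_A K = 0$ gives $K = 0$, i.e. $M$ is $A$-flat.

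The main obstacle is purely bookkeeping: one must pin down the several tensor-associativity and base-change isomorphisms so that the module structures and the maps between them match up under the identifications, especially the identity $(B \otimes_A M) \otimes_B (B \otimes_A N) \cong B \otimes_A (M \otimes_A N)$ and its naturality in $N$. Once these compatibilities are verified, the descent step is immediate from the definition of faithful flatness. Since this is the standard argument (as in Matsumura's treatment of faithfully flat descent), I do not anticipate any genuinely deep difficulty beyond this diagram-chasing.
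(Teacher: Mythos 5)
Your proof is correct. Note, however, that the paper does not actually argue this lemma at all: its ``proof'' is a bare citation to Matsumura (Exercise 7.1 of \emph{Commutative ring theory}), so there is no in-paper technique to compare against. What you have written is precisely the standard solution to that exercise, and it is sound in both directions. The ascent direction correctly uses only flatness of $M$ together with the natural identification $(B \otimes_A M) \otimes_B N \cong M \otimes_A N$ for a $B$-module $N$ restricted to $A$, and you rightly observe that injectivity of $N' \hookrightarrow N$ is insensitive to which ring it is viewed over. The descent direction is the classical faithfully flat descent argument: flatness of $B$ over $A$ lets you identify $B \otimes_A K$ with the kernel after base change, the natural isomorphism $B \otimes_A (M \otimes_A N) \cong (B \otimes_A M) \otimes_B (B \otimes_A N)$ converts that kernel into one killed by $B$-flatness of $B \otimes_A M$, and faithfulness is invoked exactly once, where it is genuinely needed, to conclude $K = 0$ from $B \otimes_A K = 0$. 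In effect your write-up supplies the content the paper delegates to the reference, which is a strictly more self-contained treatment.
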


In Lemma \ref{Matsumura}, put $B:=\hat{R}$, the $\fm$-adic
completion of local ring $R$ and $M=A:=R$. Then for every flat ring
endomorphism $\varphi:R \longrightarrow R$, the induced ring
endomorphism $\hat{\varphi}:\hat{R} \longrightarrow \hat{R}$ is
flat.

\begin{definition}
For two rings $A$ and $B$, a ring map $f:A \rightarrow B$ has
  the going-down property if, for any two primes $\fp' \subsetneq \fp$ of
   $A$ and any prime $\mathcal{P}$ in $B$ with $f^{-1}(\mathcal{P})=\fp$,
    there is a prime $\mathcal{P'} \subsetneq \mathcal{P}$ of $B$ such that
    $f^{-1}(\mathcal{P'})=\fp'$.
\end{definition}

\begin{nt} Let  $\varphi:R \longrightarrow R$ be a ring endomorphism
satisfying the going down property. Suppose that $I$ is an ideal of
$R$ with finite projective
 dimension.  We say that the triple $(R, I, \varphi)$ has (GD) property.
\end{nt}

\begin{example} \label{flat} 
\begin{enumerate}[(i)]

\item Notice that every flat homomorphism implies the going down property \cite[(5.D) Theorem 4]{MatCA}.

\item In general, an extension $A \subset B$ of domains, with $A$ normal and $B$ integral over $A$ satisfies going down property, for a ring homomorphism  $f:A \longrightarrow B$ \cite[(5.E) Theorem 5]{MatCA}.

\item Let $R$ be a commutative Noetherian ring, and let $\xymatrix@1{R\ar[r]^-{\varphi}& R}$ be a ring homomorphism such that the induced map on $\Spec (R)$ is the identity. Then, $\varphi$ satisfies the Going Down property.

\item  Take into account that the Frobenius endomorphism of a regular local ring of positive characteristic and for any field $k$ and any integer $t \geq 2$, the $k$-linear endomorphism $\varphi (x_i)=x^t_i$ of $k[x_1, \ldots , x_n]$ are the prototypical examples of flat endomorphisms.

\item For a given regular local ring $R$, by \cite[Theorem 33.1]{Mat} the endomorphism $\varphi: R \longrightarrow R$ which is of finite length, is flat. A local homomorphism $f:(A, \fm) \longrightarrow (B, \fn)$ is of finite length, if the following equivalent conditions hold: a) $B/f(\fm)B$ is Artinian. b) The only prime ideal of $B$ which contracts to $\fm$ is $\fn$, see \cite[Definition 1]{MMS} and the references given therein for more details.
      
Let $R=k[\![x_1,x_2,x_3,x_4]\!]$ be a formal power series ring of $4$ variables over a field $k$ (of characteristic zero). Define $\varphi: R \longrightarrow R$ by $x_1 \mapsto x^3_1,\ x_2 \mapsto x^2_2,\ x_3 \mapsto x^2_3,\ x_4 \mapsto x^7_4$ and $I$ an ideal generated by $z$ and $\varphi^i(z),\ i \in \mathbb{Z}$ (orbit of $z$ under $\varphi$), where $z:=x^2_2+x^5_3$. It is clear that $\varphi(R)$ is a flat $R$-module and $\sqrt{I}=\sqrt{\varphi (I)R}.$

\item More generally, suppose that $R$ is a regular local ring, by \cite[Theorem 13.3]{AIM} every contracting endomorphism $\varphi: R \longrightarrow R$ with $\sqrt{\varphi (\fm)R}=\fm$ is flat. We say $\varphi: R \longrightarrow R$ is contracting if $\varphi^i (\fm)R \subseteq \fm^2$ for some integer $i$. We refer the interested reader to \cite{AIM} for more details.
\end{enumerate} 
\end{example}

\begin{nt}\label{Inj property} Suppose that $\Ext^i_R(R/\varphi^t(I)R,R) \rightarrow \Ext^i_R(R/\varphi^{t+1}(I)R,R),$
is injective for all $i \geq 0$ and all $t \geq 0$. We say $(R, I,
\varphi)$ has (Inj) property.
\end{nt}

\begin{definition} \label{pure}
A ring homomorphism  $f:A \rightarrow B$ is pure if the map $f
\otimes 1:A \otimes_A M \rightarrow B \otimes_A M$  is injective for
each $A$-module $M$.  If $A$  contains a field of prime characteristic $p$, then $A$ is $F$-pure if the Frobenius homomorphism $F:A \rightarrow A, a \mapsto a^p$ is pure, where $a \in A$.
\end{definition}

\begin{remark} \label{pure2} Note that by virtue of
 \cite[Lemma 6.2]{Ho-Ro} purity of $\varphi:R \longrightarrow R$
 implies the same property for $\bar{\varphi}:R/I \longrightarrow
 R/\varphi^t(I)$ for all positive integer $t$. It follows from \cite[Theorem
2.8]{S-W} that, when $R$ is a regular ring and the triple
$(R,I,\varphi)$ satisfies the (Flat) property, purity of
$\bar{\varphi}:R/I \longrightarrow R/I$
 implies that $(R,I,\varphi)$ has (Inj) property.

Let $R=k[x_1, \ldots , x_n]$ be a polynomial ring over a field $k$. There exists a $k$-linear endomorphism
 $\varphi:R \longrightarrow R$ with $\varphi (x_i)=x^t_i$ for $1 \leq i \leq n$ and positive integer $t$.
  If $I \subset R$ is an ideal generated by square-free monomials, then there exists a pure endomorphism
   $\bar{\varphi}:R/I \longrightarrow R/I$, (cf. \cite[Example 2.2]{S-W}).
\end{remark}

 \section{ON A RESULT OF PESKINE-SZPIRO}\label{on a result of Peskine-Szpiro}

We start by recalling properties of some homological invariants.
Then discuss and examine their relationships using ring
endomorphisms. Let $(R,\fm)$ be a local ring and $I$ be an ideal of
$R$ of finite projective dimension ($\pd R/I < \infty$). The
Auslander-Buchsbaum formula says that $\pd_R R/I=\depth R-\depth
R/I$. On the other hand, when $R$ is a complete local domain, the Hartshorne--Lichtenbaum Theorem gives a necessary and sufficient condition to guarantee $\cd (R,I)\leq\dim (R)-1,$ and Huneke and Lyubeznik in \cite[Theorem
1.1]{Hun-Lyu} give conditions for $\cd (R,I) \leq \dim R-2$, where
$R$ is a regular local ring containing a field. The interested reader may like to consult \cite{Bhattacharyya2021} and the references given therein for additional details.

Hereafter in this section, we focus on finding sufficient conditions for the equality $\cd(R,I)=\dim R- \depth R/I$.

\begin{lem} \label{EABR} (cf. \cite[Lemma 3.4]{EABR})
Let $(R, \fm)$ be a Noetherian local ring and $I \subset R$ an ideal of
finite projective
 dimension. Assume that $\varphi:R \rightarrow R$ is a ring  endomorphism
 satisfying the going down property and $J=\varphi(I)R$. Then $\depth R/I \leq \depth R/J$.
\end{lem}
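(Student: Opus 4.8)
The plan is to prove the stronger statement $\pd_R R/J \le \pd_R R/I$ and then read off the depth inequality from the Auslander--Buchsbaum formula. Since $\pd_R R/I =: n < \infty$, fix a finite free resolution
$$F_\bullet\colon\quad 0 \to F_n \xrightarrow{\phi_n} \cdots \xrightarrow{\phi_2} F_1 \xrightarrow{\phi_1} F_0 \to R/I \to 0,$$
with $F_0 = R$, so that $\im \phi_1 = I$. Choosing bases and applying the ring endomorphism $\varphi$ entrywise to the matrices $\phi_k$ produces a complex $\varphi(F_\bullet)$ built from the same free modules $F_k$ but with differentials $\varphi(\phi_k)$, whose rightmost cokernel is $R/\varphi(I)R = R/J$. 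Thus it suffices to show that $\varphi(F_\bullet)$ is again acyclic, for then it is a free resolution of $R/J$ of length at most $n$.

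First I would record the single place where the going-down hypothesis enters: for any ideal $\fa \subseteq R$ one has $\Ht(\varphi(\fa)R) \ge \Ht(\fa)$. Indeed, if $\mathcal P$ is a minimal prime of $\varphi(\fa)R$ then $\fp := \varphi^{-1}(\mathcal P)$ contains $\fa$, and going-down lets one lift any chain of primes descending from $\fp$ to a chain descending from $\mathcal P$; hence $\Ht \mathcal P \ge \Ht \fp \ge \Ht \fa$, and taking the minimum over minimal primes gives the claim.

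Next I would invoke the Buchsbaum--Eisenbud acyclicity criterion. For the resolution $F_\bullet$ the criterion holds: writing $r_k = \operatorname{rank}\phi_k$, one has $r_k + r_{k+1} = \operatorname{rank} F_k$ and $\grade I_{r_k}(\phi_k) \ge k$ for all $k \ge 1$. Applying $\varphi$ preserves the rank equalities: the grade condition forces $I_{r_k}(\phi_k)$ to have positive height, so by the height bound above $\varphi(I_{r_k}(\phi_k))R = I_{r_k}(\varphi(\phi_k))$ is nonzero, while $I_{r_k+1}(\phi_k)=0$ maps to $0$; hence the determinantal ranks are unchanged. It remains to verify the grade conditions $\grade I_{r_k}(\varphi(\phi_k)) \ge k$. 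This is where I expect the main difficulty, since going-down controls heights and not grades: to pass from $\Ht\bigl(I_{r_k}(\varphi(\phi_k))\bigr) \ge \Ht I_{r_k}(\phi_k) \ge k$ to the corresponding grade inequality I would use that $R$ is Cohen--Macaulay (as in the regular local rings of the intended applications), where grade and height coincide. Granting this, $\varphi(F_\bullet)$ satisfies the Buchsbaum--Eisenbud criterion and is therefore acyclic.

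Consequently $\varphi(F_\bullet)$ resolves $R/J$, giving $\pd_R R/J \le n = \pd_R R/I < \infty$. Applying the Auslander--Buchsbaum formula to both finite-projective-dimension modules then yields
$$\depth R/J = \depth R - \pd_R R/J \ge \depth R - \pd_R R/I = \depth R/I,$$
which is the desired inequality. The crux of the argument is thus the grade-from-height step in the acyclicity check, which is precisely the point at which the going-down property together with the finite projective dimension of $R/I$ does the essential work.
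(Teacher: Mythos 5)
The paper offers no proof of this lemma at all --- it is quoted from \cite[Lemma 3.4]{EABR} --- so your proposal can only be measured against that cited argument and on its own merits. Your route (apply $\varphi$ entrywise to a finite free resolution, check acyclicity of the resulting complex by Buchsbaum--Eisenbud, then conclude via Auslander--Buchsbaum) is the natural one for this statement, and most of its steps are sound: $\varphi$ commutes with determinants, so $I_{r}(\varphi(\phi_k))=\varphi(I_{r}(\phi_k))R$; going-down does give $\Ht(\varphi(\fa)R)\geq\Ht(\fa)$ for every ideal $\fa$; the rank conditions transfer; and once acyclicity of $\varphi(F_\bullet)$ is granted, $\pd_R R/J\leq\pd_R R/I$ plus Auslander--Buchsbaum yields the depth inequality --- here you correctly exploit that $\varphi$ is an \emph{endomorphism}, so the two occurrences of $\depth R$ cancel.

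The genuine gap is exactly the step you flag yourself: the lemma is stated for an \emph{arbitrary} Noetherian local ring, whereas your passage from $\Ht\bigl(I_{r_k}(\varphi(\phi_k))\bigr)\geq k$ to $\grade\bigl(I_{r_k}(\varphi(\phi_k))\bigr)\geq k$ imports Cohen--Macaulayness of $R$, which is not among the hypotheses. This is not a cosmetic defect: Buchsbaum--Eisenbud genuinely requires grade, and in a non-CM ring height does not bound grade from below; moreover, going-down together with finite projective dimension cannot by itself force the acyclicity. Concretely, take $A=k[[s^4,t^4]]\subset B=k[[s^4,s^3t,st^3,t^4]]$ (the completion of the very ring in Hartshorne's example quoted in Section 5): this inclusion is integral with $A$ normal and $B$ a domain, hence satisfies going-down, and $\fm_A=(s^4,t^4)$ has finite projective dimension over $A$. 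Base-changing the Koszul resolution of $A/\fm_A$ along this map produces a complex over $B$ whose relevant determinantal ideal $\fm_A B$ has height $2$ (so all the height conditions hold) but grade equal to $\depth B=1$, and the complex is indeed \emph{not} acyclic, since $\Tor_1^A(A/\fm_A,B)\neq 0$ ($B$ is finite but not flat over $A$, because $B$ is not Cohen--Macaulay). The same pair with $I=(s^4)$ even violates the depth conclusion for maps between two different rings: $\depth A/I=1$ while $\depth B/IB=0$. So any proof valid for non-CM $R$ must use the endomorphism structure already in the acyclicity step, which your argument does not; as written, your proof establishes the lemma only when $R$ is Cohen--Macaulay.

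That restriction happens to be invisible in the present paper: every place the lemma is invoked (Corollary \ref{depth equality}, Theorems \ref{PS Generalize} and \ref{CM cond}, and their consequences) assumes $R$ Gorenstein or regular, hence Cohen--Macaulay, and in that setting your argument is complete and even delivers the stronger conclusion $\pd_R R/\varphi(I)R\leq\pd_R R/I$. But to recover the lemma in the stated generality you would need either a different mechanism for the acyclicity (one that sees that source and target of $\varphi$ coincide) or to verify that the cited proof in \cite{EABR} supplies one.
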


\begin{cor} \label{depth equality}
Let $(R, \fm)$ be a Gorenstein local ring. Suppose that
\begin{enumerate}[(a)]
\item The triple $(R, I, \varphi)$ satisfies (Inj);
\item For any $t\geq 0,$ $\depth R/\varphi^{t}(I)R \leq \depth R/\varphi^{t+1}(I)R;$
\item Projective dimension of $R/I$ is finite;
\end{enumerate}
Then, the following statements are true.
\begin{enumerate}[(1)]
\item $\depth_R R/I = \depth_R R/ \varphi^t(I)R,\ \ \text{\ for \ all \ } t \geq 0$.
\item $R/I$ is Cohen-Macaulay if and only if  $R/ \varphi^t(I)R$, is Cohen-Macaulay for all  $t \geq 0$.
\item $\Ass_R R/I = \Ass_R R/ \varphi^t(I)R,\ \ \text{\ for \ all \ } t \geq 0$.
\end{enumerate}
\end{cor}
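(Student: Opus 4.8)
The strategy rests on viewing local cohomology as a filtered colimit of Ext modules. Since $\{\varphi^t(I)R\}_{t\ge 0}$ is a decreasing chain cofinal with $\{I^t\}_{t\ge 0}$, one has $H^i_I(R)\cong\varinjlim_t\Ext^i_R(R/\varphi^t(I)R,R)$ for every $i$, and the transition maps of this system are precisely those occurring in the (Inj) property. Because these maps are injective, each $\Ext^i_R(R/\varphi^t(I)R,R)$ injects into $H^i_I(R)$; in particular $H^i_I(R)=0$ forces $\Ext^i_R(R/\varphi^t(I)R,R)=0$ for all $t$, while a nonvanishing Ext for some $t$ forces $H^i_I(R)\ne 0$. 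This dictionary between the Ext modules and $H^\bullet_I(R)$ is the engine I would use throughout.

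For (1) I would prove the two inequalities separately. The inequality $\depth_R R/I\le\depth_R R/\varphi^t(I)R$ is Lemma \ref{EABR} applied to the endomorphism $\varphi^t$: a short diagram chase shows that a composition of going-down maps is again going-down, so $\varphi^t$ satisfies going down, and $\pd_R R/I<\infty$ by (b), whence the lemma applies verbatim with $J=\varphi^t(I)R$. For the reverse inequality I would pass to projective dimension. Writing $p=\pd_R R/I=\dim R-\depth_R R/I$ (Auslander--Buchsbaum, using (b) and that a complete Cohen--Macaulay ring has $\depth R=\dim R$), a minimal free resolution of $R/I$ together with Nakayama gives $\Ext^p_R(R/I,R)\ne 0$, and the (Inj) maps propagate this to $\Ext^p_R(R/\varphi^t(I)R,R)\ne 0$ for every $t$. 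On the other hand the generalized Peskine--Szpiro result (Theorem \ref{PS Generalize}) yields $\cd(R,I)\le p$, so $H^i_I(R)=0$ for $i>p$, and the engine above forces $\Ext^i_R(R/\varphi^t(I)R,R)=0$ for all $i>p$ and all $t$. Thus the top nonvanishing index of $\Ext^\bullet_R(R/\varphi^t(I)R,R)$ is exactly $p$, and the Auslander--Buchsbaum (equivalently Auslander--Bridger) formula then returns $\depth_R R/\varphi^t(I)R=\dim R-p=\depth_R R/I$.

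The delicate point, and what I expect to be the main obstacle, is the last inference: converting ``the top nonvanishing $\Ext_R(-,R)$ equals $p$'' into a depth statement requires the projective (or Gorenstein) dimension of $R/\varphi^t(I)R$ to be finite, which is not formal from the going-down hypothesis alone. The transparent way to secure it is to observe that $\Phi(-)=\varphi_\ast R\otimes_R-$ is simply extension of scalars along $\varphi$, so that $\Phi^t(R/I)\cong R/\varphi^t(I)R$; when $\varphi$ is flat, hence (being local) faithfully flat, $\Phi^t$ is faithfully exact and carries the minimal free resolution of $R/I$ to a minimal free resolution of $R/\varphi^t(I)R$ of the same length, giving $\pd_R R/\varphi^t(I)R=p$ outright. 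I would therefore either invoke flatness at this step (recalling flat $\Rightarrow$ going down, Example \ref{flat}, which covers the motivating examples) or try to extract the same finiteness from the (Inj) and (c) hypotheses together; routing the whole argument through local duality and the canonical module $\omega_R$, replacing $\Ext(-,R)$ by $\Ext(-,\omega_R)$, is the natural fallback in the complete Cohen--Macaulay setting.

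Finally, (2) and (3) are consequences of (1). For (2), equal radicals give $\dim_R R/\varphi^t(I)R=\dim R/\rad(\varphi^t(I)R)=\dim R/\rad(I)=\dim_R R/I$, so, the depths being equal by (1), $R/I$ is Cohen--Macaulay if and only if $R/\varphi^t(I)R$ is. For (3) I would track associated primes through the isomorphisms $\Ext^i_R(R/\varphi^t(I)R,R)\cong\Phi^t\bigl(\Ext^i_R(R/I,R)\bigr)$ of \cite[2.6.1]{S-W} and, more directly, through the faithfully flat base change $\Phi^t=\varphi^{t\ast}$: the associated primes of $\Phi^t N$ are the primes lying over those of $N$ along $\Spec\varphi^t$, and the going-down property together with condition (c), which makes $\rad(\varphi^t(\fp)R)=\fp$ on the relevant primes and forces the pertinent fibres to be single points, pins these down so that $\Ass_R R/\varphi^t(I)R=\Ass_R R/I$. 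Checking that the embedded primes match exactly is the routine-but-careful part of this last step.
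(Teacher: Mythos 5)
Your treatment of part (2) and of the first inequality in (1) (Lemma \ref{EABR} applied to the going-down endomorphism $\varphi^t$, with $\pd_R R/I<\infty$) is exactly the paper's. The genuine gap is the one you flag but never close: the reverse inequality $\depth_R R/\varphi^t(I)R\le\depth_R R/I$. Deducing it from ``the top nonvanishing index of $\Ext^{\bullet}_R(R/\varphi^t(I)R,R)$ equals $p$'' via Auslander--Buchsbaum or Auslander--Bridger requires $R/\varphi^t(I)R$ to have finite projective (or Gorenstein) dimension, which the hypotheses do not give: the corollary assumes only (GD), and (GD) is deliberately weaker than (Flat) --- e.g.\ integral extensions of normal domains, cf.\ Example \ref{flat} and Discussion \ref{discuss} --- so your proposed repair by flatness proves a statement with different hypotheses. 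The repair that stays inside the hypotheses is precisely the paper's proof, which you mention only as an unexecuted ``fallback'': put $u=\depth_R R/I$, so $H^u_{\fm}(R/I)\ne 0$, and Matlis-dualize the (Inj) injections in cohomological degree $\dim R-u$; by local duality they become surjections $H^u_{\fm}(R/\varphi^{t+1}(I)R)\twoheadrightarrow H^u_{\fm}(R/\varphi^t(I)R)$, so all these modules are nonzero and hence $\depth_R R/\varphi^t(I)R\le u$. This needs no finiteness of $\pd_R R/\varphi^t(I)R$ and no appeal to $\cd(R,I)\le\pd R/I$. On that appeal: Theorem \ref{PS Generalize} appears \emph{after} this corollary and its equality half cites it, so you must (and implicitly do) invoke only the inequality half, which rests on Lemma \ref{EABR} alone --- not circular, but it should be said explicitly; moreover that theorem is stated for Gorenstein rings while the corollary assumes only Cohen--Macaulay. (In fact, once duality is available, it alone converts your Ext data into depth, since over a complete Gorenstein ring $\depth_R M=\dim R-\sup\{i:\Ext^i_R(M,R)\ne 0\}$; Auslander--Bridger is the wrong tool here. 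Note also that both you and the paper need the Gorenstein, or canonical-module, form of duality, because (Inj) is phrased with $\Ext(-,R)$.)

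Part (3) has the same structural flaw: you track associated primes through a ``faithfully flat base change'' $\Phi^t$, but flatness is again not among the hypotheses, so neither $\Phi^t(R/I)\cong R/\varphi^t(I)R$ as you use it nor the behaviour of $\Ass$ under $\Phi^t$ is justified. The paper derives (3) from (1) with no base change at all: associated primes are detected locally ($\fp\in\Ass_R M$ if and only if $\fp R_{\fp}\in\Ass_{R_{\fp}}M_{\fp}$), so one reduces to deciding whether $\fm$ is associated, and $\fm\in\Ass_R(R/I)$ if and only if $\depth_R R/I=0$, which by (1) holds if and only if $\depth_R R/\varphi^t(I)R=0$, i.e.\ if and only if $\fm\in\Ass_R(R/\varphi^t(I)R)$. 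Reworking (3) along these lines removes the unwarranted flatness assumption.
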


\begin{proof}
Thanks to assumption (b) one may write $$\depth_R R/I \leq
\depth_R R/ \varphi(I)R \leq \depth_R R/ \varphi^2(I)R \leq
\cdots.$$ On the other hand, suppose that $\depth_R R/I =u$. As $(R,
I, \varphi)$ has (Inj) property, one has a monomorphism, for each $t,$
\[
\Ext_R^{\dim (R)-u} (R/\varphi^{t}(I)R,R)\hookrightarrow\Ext_R^{\dim (R)-u} (R/\varphi^{t+1}(I)R,R).
\]
By applying the Matlis duality functor $(-)^{\vee}=\Hom_R (-,E)$ to this monomorphism, one gets an epimorphism
\[
\Ext_R^{\dim (R)-u} (R/\varphi^{t+1}(I)R,R)^{\vee}\xymatrix{\ar@{->>}[r]& }\Ext_R^{\dim (R)-u} (R/\varphi^{t}(I)R,R)^{\vee}.
\]
Finally, using local duality \cite[11.2.5]{BroSha} the previous epimorphism is exactly the epimorphism
$$H^u_{\fm}(R/\varphi^{t+1}(I)R) \rightarrow H^u_{\fm}(R/\varphi^{t}(I)R),$$
of non zero modules, for all $t$. It implies that $\depth_R R/
\varphi^t(I)R=u$ for all $t.$ This proves part (1).

Now, note that $\dim_R R/I = \dim_R R/ \varphi^t(I)R$, for all $t$,
because $\{I^t\}_t$ is cofinal with $\{\varphi^t (I)R\}_t.$ In this way, part (2) follows combining part (1) jointly with Auslander--Buchsbaum formula.

Finally, note that for a prime ideal $\fp$ of $R$, $\fp \in \Ass_R M$ if
and only if $\fp R_{\fp} \in \Ass_{R_{\fp}} M_{\fp}$, where $M$ is
an $R$-module. Thus, one may assume that $\fp=\fm$. Hence, it is enough to prove that $\fm \in \Ass_R R/I$ if and only if $\fm  \in
\Ass_R R/ \varphi^t(I)R$, hence part (3) follows once again from part (1).
\end{proof}

Now is the time to prove the first main result of this section;

\begin{thrm} \label{PS Generalize} Let $(R,\fm)$ be a Gorenstein local ring.
Suppose that
\begin{enumerate}[(a)]
\item For any $t\geq 0,$ $\depth R/\varphi^{t}(I)R \leq \depth R/\varphi^{t+1}(I)R;$
\item Projective dimension of $R/I$ is finite;
\end{enumerate}
Then one has $\cd(R,I) \leq \pd R/I$.

If in addition, the triple $(R, I, \varphi)$ satisfies (Inj)
property, then the equality holds.
\end{thrm}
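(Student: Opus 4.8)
The plan is to realise the local cohomology modules as a direct limit running over the cofinal family $\{\varphi^t(I)R\}_{t\ge 0}$ and to read off the vanishing of the relevant $\Ext$ groups through Grothendieck local duality, which is available because $R$ is complete and Gorenstein. Writing $d=\dim R$ and using that $\{\varphi^t(I)R\}_{t\ge 0}$ is cofinal with $\{I^t\}_{t\ge 0}$, one has
$$H^i_I(R)\cong \vil_t \Ext^i_R\bigl(R/\varphi^t(I)R,\,R\bigr),$$
with transition maps induced by the surjections $R/\varphi^{t+1}(I)R\twoheadrightarrow R/\varphi^t(I)R$; these are precisely the maps whose injectivity is the content of the (Inj) property. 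Since $R$ is Gorenstein complete, local duality gives $\Ext^i_R(M,R)^{\vee}\cong H^{d-i}_{\fm}(M)$ for every finitely generated $R$-module $M$, so each $\Ext$ term can be controlled purely by the depth of $R/\varphi^t(I)R$.

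First I would prove the inequality $\cd(R,I)\le \pd R/I$. Set $n=\pd R/I$; by (b) this is finite, and the Auslander--Buchsbaum formula over the Gorenstein ring $R$ gives $n=d-\depth R/I$. The key point is the depth estimate $\depth R/\varphi^t(I)R\ge \depth R/I$ for every $t$. To obtain it I would \emph{apply Lemma \ref{EABR} to the iterate $\varphi^t$ in place of $\varphi$}: a composite of going-down maps is again going down, so $\varphi^t$ satisfies the going-down property, while the finite-projective-dimension hypothesis of that lemma is required only of $I$, which is (b). Thus Lemma \ref{EABR} yields $\depth R/I\le \depth R/\varphi^t(I)R$ directly, with no need to know anything about the projective dimension of the intermediate quotients $R/\varphi^s(I)R$. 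Now for $i>n$ we have $d-i<d-n=\depth R/I\le \depth R/\varphi^t(I)R$, hence $H^{d-i}_{\fm}(R/\varphi^t(I)R)=0$, and local duality forces $\Ext^i_R(R/\varphi^t(I)R,R)=0$ for all $t$. Passing to the direct limit gives $H^i_I(R)=0$ for $i>n$, that is, $\cd(R,I)\le \pd R/I$.

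For the reverse inequality under the (Inj) hypothesis I would show that the top $\Ext$ does not die in the limit. At $t=0$, local duality together with Auslander--Buchsbaum gives $\Ext^n_R(R/I,R)^{\vee}\cong H^{d-n}_{\fm}(R/I)=H^{\depth R/I}_{\fm}(R/I)$, which is nonzero because the least non-vanishing local cohomology of a module sits in degree equal to its depth. Hence $\Ext^n_R(R/I,R)\ne 0$. The (Inj) property makes every transition map in the system $\{\Ext^n_R(R/\varphi^t(I)R,R)\}_t$ injective, so the canonical map from the $t=0$ term into the colimit is injective; therefore $H^n_I(R)\ne 0$ and $\cd(R,I)\ge n$. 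Combined with the first part this gives the equality $\cd(R,I)=\pd R/I$.

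The step I expect to need the most care is the depth comparison. The naive route---trying to bound $\pd R/\varphi^t(I)R$ and then invoke Auslander--Buchsbaum---would force one to propagate finite projective dimension along $\varphi$, which is not available from going down alone, since flatness is not assumed here. The device of applying Lemma \ref{EABR} to $\varphi^t$ and of translating everything into vanishing of $H^{d-i}_{\fm}$ via local duality is exactly what circumvents this obstacle. Consequently the points that still demand verification are that the transition maps produced by cofinality genuinely coincide with the maps appearing in the (Inj) hypothesis, and that the running hypotheses---in particular (c)---keep each $\varphi^t$ a local endomorphism with $\varphi^t(\fm)R$ primary to $\fm$, so that both Lemma \ref{EABR} and local duality legitimately apply to the iterates.
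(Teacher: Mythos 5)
Your proof is correct. The first inequality is argued exactly as in the paper: both you and the author apply Lemma \ref{EABR} to the iterate $\varphi^t$ (going-down being stable under composition, so finite projective dimension is needed only for $I$ itself), then combine Auslander--Buchsbaum with local duality over the complete Gorenstein ring and pass to the limit over the cofinal family $\{\varphi^t(I)R\}_{t\geq 0}$; the only cosmetic difference is that you keep everything on the $\Ext$ side, while the paper dualizes to local cohomology and takes inverse limits. The genuine divergence is in the equality. The paper invokes Corollary \ref{depth equality} (which itself needs (GD), (Inj) and finite projective dimension) to obtain $\depth R/\varphi^t(I)R=\depth R/I$, hence nonvanishing of $H^{\dim R-u}_{\fm}(R/\varphi^t(I)R)$ at \emph{every} stage $t$; it then translates (Inj) via local duality into surjectivity of the inverse system of these local cohomology modules, concludes that $\vpl_t H^{\dim R-u}_{\fm}(R/I^t)\neq 0$, and dualizes once more to get $H^u_I(R)\neq 0$. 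You work instead with the direct system $\{\Ext^n_R(R/\varphi^t(I)R,R)\}_{t\geq 0}$ itself: nonvanishing is needed only at $t=0$, where it is immediate from $\depth R/I=d-n$, and (Inj) is used verbatim as injectivity of the transition maps, so the canonical map from the $t=0$ term into the colimit $H^n_I(R)$ is injective. Your route buys a real economy: it bypasses Corollary \ref{depth equality} entirely, and it replaces the nonvanishing-of-the-inverse-limit step (which rests on a Mittag-Leffler-type fact for countable surjective systems) by the elementary fact that a filtered colimit along injections of a nonzero module is nonzero. The two points you flag for verification do hold and are used implicitly by the paper as well: the transition maps of the cofinal system are precisely the maps appearing in the (Inj) hypothesis, and hypothesis (c) keeps each $\varphi^t(\fm)R$ primary to $\fm$, so Lemma \ref{EABR} and local duality apply at every stage.
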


\begin{proof} Suppose that $\pd R/I=u$ for some integer $u$. By the
Auslander-Buchsbaum formula $\depth R/I=\dim R-u.$ Now, combining this equality jointly with our assumption (a) one has
$$H^i_{\fm}(R/\varphi^t(I)R)=0 \text{\ for\ all\ } i < \dim R-u \text{\ and\ all\ } t.$$
This implies, again by local duality \cite[11.2.5]{BroSha}, that
\[
\Ext_R^{\dim (R)-i}(R/\varphi^t (I)R,R)^{\vee}=0\text{ for all }i<\dim (R)-u.
\]
Since Matlis duality is a faithful functor, one has that the above vanishing is equivalent to
\[
\Ext_R^{\dim (R)-i}(R/\varphi^t (I)R,R)=0\text{ for all }i<\dim (R)-u.
\]
Finally, using the fact that $\{\varphi^t (I)R\}_t$ is cofinal with respect to $\{I^t\}_t$ jointly with this Ext vanishing one obtains, for all $\dim (R)-i>u,$ that
\[
H^{\dim R-i}_I(R)=\vil \Ext_R^{\dim (R)-i} (R/\varphi^t (I)R,R)=0.
\]
This vanishing implies that $\cd(R,I) \leq \pd R/I.$

In order to prove the equality $\cd(R,I) = \pd R/I$, it is enough to prove $\cd(R,I) \geq \pd R/I$.
It follows from
the assumptions and Corollary \ref{depth equality} that the
homomorphisms
$$H^{\dim R-u}_{\fm}(R/\varphi^{t+1}(I)R) \rightarrow H^{\dim R-u}_{\fm}(R/\varphi^{t}(I)R),$$
induced from natural homomorphism $R/\varphi^{t+1}(I)R \rightarrow
R/\varphi^{t}(I)R$, are surjective of nonzero modules for all $t$.
Then  $ {\vpl}_t H^{\dim R-u}_{\fm}(R/I^t) \neq 0$ and $ {\vpl}_t
H^{i}_{\fm}(R/I^t) = 0$ for all $i < \dim R-u$. To this end note
that as $\{\varphi^t(I)R\}_{t \geq 0}$ and $\{I^tR\}_{t \geq 0}$ are
cofinal, one has $ {\vpl}_t H^{\dim R-u}_{\fm}(R/\varphi^t(I)R)
\cong {\vpl}_t H^{\dim R-u}_{\fm}(R/I^t)$.

Finally, combining Local Duality jointly with the fact that contravariant Hom transforms inverse limits into direct limits and that the Matlis duality functor is faithful one gets that
\begin{align*}
H_I^u (R)& =\vil \Ext_R^u (R/I^t, R)=\vil \Hom_R (H^{\dim R-u}_{\fm}(R/I^t),E)\\
& \cong\Hom_R (\vpl H^{\dim R-u}_{\fm}(R/I^t),E)\neq 0.
\end{align*}
Therefore, we have checked that $H^{u}_{I}(R) \neq
0,$ hence $u \leq \cd (R,I)$ that is $\cd(R,I) \geq \pd R/I$, as desired.
\end{proof}
The first consequence of Theorem \ref{PS Generalize} we want to single out is the below:

\begin{cor}
Let $(R,\fm)$ be a Gorenstein local ring of prime characteristic $p,$ and let $I\subseteq R$ be an ideal of finite projective dimension. Then, one has $\pd
R/I\leq \cd(R,I).$

If, in addition, $F$ denotes the Frobenius endomorphism of $R,$ and the triple $(R,I,F)$ satisfies the (Inj) property, then $\pd R/I=\cd(R,I).$
\end{cor}

\begin{proof}
Since $F$ induces identity on $\Spec (R),$ it satisfies Going Down, in particular, it increases depth thanks to Lemma \ref{EABR}. Therefore, the result follows immediately from Theorem \ref{PS Generalize}.
\end{proof}

Next, as a by-product of Theorem \ref{PS Generalize} we can recover
a result of Lyubeznik \cite{Lyu84}.

\begin{cor}\label{corollary for squarefree monomial ideals}
Let $R=k[x_1,\ldots, x_n]$ be a polynomial ring of $n$ variables
over a field $k$ and $I$ be a square-free monomial ideal. Then $\pd
R/I=\cd(R,I)$.
\end{cor}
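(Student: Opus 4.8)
The plan is to apply Theorem \ref{PS Generalize} to the completion of $R$ at its irrelevant maximal ideal, equipped with the endomorphism that raises each variable to a fixed power, and then to transport the two invariants $\cd(R,I)$ and $\pd R/I$ back to $R$ by exploiting the $\BZ$-grading. Write $\fm=(x_1,\ldots,x_n)$, fix an integer $t\geq 2$, and let $\varphi\colon R\longrightarrow R$ be the $k$-linear endomorphism with $\varphi(x_i)=x_i^t$, which is flat by Example \ref{flat}. Since $\varphi^{-1}(\fm)=\fm$, it localizes to a local flat endomorphism of $R_{\fm}$ and, being $\fm$-adically continuous, extends to a local endomorphism $\hat{\varphi}$ of $\hat{R}=k[[x_1,\ldots,x_n]]$; flatness of $\hat{\varphi}$ follows from the remark after Lemma \ref{Matsumura}. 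Note that $\hat{R}$ is a regular, hence Gorenstein, complete local ring.

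Next I would check the hypotheses of Theorem \ref{PS Generalize} for the triple $(\hat{R},I\hat{R},\hat{\varphi})$. Each square-free generator $x_{i_1}\cdots x_{i_r}$ of $I$ maps to $(x_{i_1}\cdots x_{i_r})^{t}\in I$, so $\varphi(I)\subseteq I$; since $\hat{R}$ is normal, Discussion \ref{discuss} supplies both the cofinality of $\{\hat{\varphi}^s(I\hat{R})\hat{R}\}$ with $\{(I\hat{R})^s\}$ and the (GD) property (flatness implies going-down). Hypothesis (b) holds because $\hat{R}$ is regular, so $\pd_{\hat{R}}\hat{R}/I\hat{R}<\infty$, and hypothesis (c) holds because $\rad(\hat{\varphi}(\hat{\fm})\hat{R})=\rad((x_1^t,\ldots,x_n^t))=\hat{\fm}$. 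Finally, since $I$ is square-free, Remark \ref{pure2} produces a pure endomorphism $\bar{\varphi}$ of $R/I$, and as $\hat{R}$ is regular with the (Flat) property present, the cited consequence of \cite{S-W} in Remark \ref{pure2} forces the (Inj) property. Theorem \ref{PS Generalize} then yields $\cd(\hat{R},I\hat{R})=\pd_{\hat{R}}\hat{R}/I\hat{R}$.

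It remains to descend both invariants to $R$. For projective dimension this is routine: the minimal graded free resolution of $R/I$ localizes at $\fm$ and is preserved by the faithfully flat completion $R_{\fm}\to\hat{R}$, so $\pd_R R/I=\pd_{R_{\fm}}(R/I)_{\fm}=\pd_{\hat{R}}\hat{R}/I\hat{R}$. For cohomological dimension, flat base change gives at once $\cd(\hat{R},I\hat{R})\leq\cd(R_{\fm},IR_{\fm})\leq\cd(R,I)$. For the reverse inequality I would use that, for a monomial ideal $I$, each $H^i_I(R)$ is a $\BZ$-graded $R$-module: any nonzero homogeneous element generates a cyclic graded submodule of the form $R/\Ann$, whose support contains the unique graded maximal ideal $\fm$, so $\fm\in\Supp H^{\cd(R,I)}_I(R)$ and therefore $H^{\cd(R,I)}_{IR_{\fm}}(R_{\fm})\neq 0$. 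Hence $\cd(R_{\fm},IR_{\fm})=\cd(R,I)$, and completion (faithfully flat over the local ring $R_{\fm}$) preserves this, giving $\cd(R,I)=\cd(\hat{R},I\hat{R})$. Combining the chain $\cd(R,I)=\cd(\hat{R},I\hat{R})=\pd_{\hat{R}}\hat{R}/I\hat{R}=\pd_R R/I$ finishes the proof.

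The hard part will be the descent of the cohomological dimension, namely the reverse inequality $\cd(R,I)\leq\cd(\hat{R},I\hat{R})$: plain flat base change only gives non-increase, and it is precisely the $\BZ$-grading of the local cohomology of a monomial ideal, together with $\fm$ being the unique graded maximal ideal, that restores the needed faithfulness. The other point requiring care is verifying the (Inj) property honestly, i.e. that purity of $\bar{\varphi}$ survives completion and produces the required injectivity of the maps on $\Ext$ modules through \cite{S-W}.
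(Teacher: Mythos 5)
Your proposal is correct and takes essentially the same route as the paper: the endomorphism $\varphi(x_i)=x_i^t$, passage to the graded local ring and its completion, and an application of Theorem \ref{PS Generalize} together with Remark \ref{pure2} to obtain the (Inj) property from square-freeness. The paper compresses the entire transfer of $\cd(R,I)$ and $\pd R/I$ to $\hat{R}$ into the words ``without loss of generality,'' so your graded-support argument for the descent of cohomological dimension supplies exactly the missing detail, and the purity/(Inj) transfer you flag at the end closes the same way: (Inj) already holds for the triple over the polynomial ring $R$ by \cite[Theorem 2.8]{S-W}, and the injectivity of the maps of finitely generated $\Ext$ modules is preserved under the flat base change $R\to\hat{R}$.
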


\begin{proof}
Define the $k$-linear endomorphism $\varphi:R \longrightarrow R$ by
$x_i \mapsto x^2_i$
 for $1 \leq i \leq n.$
 It is a flat ring endomorphism and without loss of generality, after localizing at the maximal ideal $\fm=(x_1,\ldots, x_n)$ we may assume that $R$ is a local ring such that $(R,I,\varphi)$ satisfies the (GD) property. Hence, we are done
by Theorem \ref{PS Generalize}. To complete the proof note that by
what we have indicated at Remark \ref{pure2}, the endomorphism $R/I
\rightarrow R/I$ is pure.
\end{proof}

\begin{discussion}\label{Hartshorne} A special case of a conjecture of Hartshorne \cite[page 126]{Har70} is the following:

\textbf{Conjecture:} Let $C$ be a curve in $\mathbb{P}^3_k$ over a field $k$ of characteristic $0$.
 If $C$ is a set-theoretic complete intersection, the curve $C$ is arithmatically Cohen-Macaulay.
  I.e. the homogeneous coordinate ring $k[x_1, \ldots, x_n]/I(C)$ is  Cohen-Macaulay.

It is known that this conjecture is not true, see for instance
\cite{St-Vo}.  To do so further and motivated by the preceding
conjecture we consider the equivalent property between the
Cohen-Macaulayness of $R/I$ and the equality $\Ht (I) = \cd (R,I)$.
\end{discussion}

\begin{thrm} \label{CM cond} Let $(R,\fm)$ be a Gorenstein local ring.
Suppose that
\begin{enumerate}[(a)]
\item The triple $(R, I, \varphi)$ has (GD) property;
\item Projective dimension of $R/I$ is finite;
 \end{enumerate}
 Then the following statements are equivalent.
\begin{enumerate}[(1)]
  \item $R/I$ is a Cohen-Macaulay ring.
  \item $\Ht(I)=\cd(R,I)$ and $\Ext^r_R(R/\varphi^{t}(I)R,R) \longrightarrow
  \Ext^r_R(R/\varphi^{t+1}(I)R,R)$ are injective of nonzero $R$-modules for all $t$ and $r:=\depth R/I$.
  \end{enumerate}
\end{thrm}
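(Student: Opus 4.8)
The plan is to use Grothendieck local duality over the Gorenstein complete local ring $(R,\fm)$ as the bridge that turns the $\Ext$-maps in (2) into maps of local cohomology modules, so the whole argument runs in parallel with the proof of Theorem \ref{PS Generalize}. Write $n=\dim R$ and $r=\depth R/I$. Two numerical facts are used throughout. First, since $R$ is Cohen-Macaulay (indeed Gorenstein) and complete it is equidimensional and catenary, so $\dim R/I=n-\Ht(I)$. Second, by Auslander-Buchsbaum $\pd R/I=n-r$. For a finitely generated module $M$ local duality gives $\Ext^i_R(M,R)\cong H^{n-i}_{\fm}(M)^{\vee}$ (Matlis dual), so the transition maps on the top nonvanishing $\Ext$ of $R/\varphi^t(I)R$, which sits in degree $\pd R/I=n-r$, dualize to the natural maps
$$H^{r}_{\fm}(R/\varphi^{t+1}(I)R)\lo H^{r}_{\fm}(R/\varphi^{t}(I)R),$$
with injectivity of the $\Ext$-maps equivalent to surjectivity of these and nonvanishing preserved. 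I would record this equivalence once and invoke it in both directions.

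For $(1)\Rightarrow(2)$, assume $R/I$ is Cohen-Macaulay, so $r=\depth R/I=\dim R/I=n-\Ht(I)$ and hence $\pd R/I=n-r=\Ht(I)$. Theorem \ref{PS Generalize} then gives $\cd(R,I)\le\pd R/I=\Ht(I)\le\cd(R,I)$, forcing $\Ht(I)=\cd(R,I)$. For the map condition I would first propagate Cohen-Macaulayness: Lemma \ref{EABR}, applied to the composites of going-down maps, yields $r=\depth R/I\le\depth R/\varphi^t(I)R\le\dim R/\varphi^t(I)R=\dim R/I=r$, so every $R/\varphi^t(I)R$ is Cohen-Macaulay of dimension $r$. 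Then, from the short exact sequences $0\to \varphi^t(I)R/\varphi^{t+1}(I)R\to R/\varphi^{t+1}(I)R\to R/\varphi^t(I)R\to 0$ whose kernel has dimension $\le r$, the long exact local cohomology sequence shows the maps $H^{r}_{\fm}(R/\varphi^{t+1}(I)R)\to H^{r}_{\fm}(R/\varphi^{t}(I)R)$ are surjective, because the obstruction lies in $H^{r+1}_{\fm}$ of a module of dimension $\le r$ and so vanishes. Dualizing delivers the required injective, nonzero $\Ext$-maps.

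For $(2)\Rightarrow(1)$, the injective nonzero $\Ext$-maps dualize to a surjective inverse system of nonzero Artinian modules $H^{r}_{\fm}(R/\varphi^{t+1}(I)R)\twoheadrightarrow H^{r}_{\fm}(R/\varphi^{t}(I)R)$, whose inverse limit is therefore nonzero. By cofinality of $\{\varphi^t(I)R\}$ with $\{I^t\}$ this reads $\vpl_t H^{r}_{\fm}(R/I^t)\neq 0$, which by the local-duality mechanism already used at the end of the proof of Theorem \ref{PS Generalize} forces $H^{n-r}_I(R)\neq 0$, i.e. $\cd(R,I)\ge n-r=\pd R/I$. Combined with the inequality $\cd(R,I)\le\pd R/I$ of Theorem \ref{PS Generalize}, this gives $\cd(R,I)=\pd R/I=n-r$; the hypothesis $\Ht(I)=\cd(R,I)$ then yields $\Ht(I)=n-r$, whence $\dim R/I=n-\Ht(I)=r=\depth R/I$ and $R/I$ is Cohen-Macaulay.

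The part I expect to be most delicate is the consistent bookkeeping of cohomological degrees across local duality: the condition in (2) must be read at the degree $\pd R/I=n-r$ dual to $H^{r}_{\fm}$ (the bottom, i.e. $\depth$-level, local cohomology), and the same indexing must match the direct-limit description $H^{n-r}_I(R)=\vil_t\Ext^{n-r}_R(R/I^t,R)$ used to recover nonvanishing of the cohomological dimension. A second point requiring care is that the nonvanishing half of the hypothesis in (2) is genuinely needed: it guarantees each $H^{r}_{\fm}(R/\varphi^t(I)R)\neq 0$, and together with surjectivity of the transition maps and completeness of $R$ (so these are Artinian) ensures the inverse limit is nonzero. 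The rest is the routine verification that the going-down and radical hypotheses (a) and (c) let me transport depth, dimension and cofinality along the chain $\{\varphi^t(I)R\}$.
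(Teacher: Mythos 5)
Your proposal is correct and follows essentially the same route as the paper: local duality converts the $\Ext$ transition maps into the maps $H^r_{\fm}(R/\varphi^{t+1}(I)R)\to H^r_{\fm}(R/\varphi^{t}(I)R)$, the short exact sequence $0 \to \varphi^{t}(I)R/\varphi^{t+1}(I)R \to R/\varphi^{t+1}(I)R \to R/\varphi^{t}(I)R \to 0$ plus Grothendieck vanishing yields their surjectivity in $(1)\Rightarrow(2)$, and in $(2)\Rightarrow(1)$ the surjective inverse system of nonzero modules gives a nonzero inverse limit and hence nonvanishing of $H^{\dim R-r}_I(R)$, exactly as in the paper (your detour through $\cd(R,I)=\pd R/I$ versus the paper's direct use of the vanishing of the other inverse limits is only a cosmetic difference). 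Your one deviation---reading the $\Ext$ degree in (2) as $\pd R/I=\dim R-r$ rather than the literal $r=\depth R/I$ of the statement---is in fact what the paper's own proof uses, since its epimorphisms live in $H^{r}_{\fm}$, which dualizes to $\Ext^{\dim R-r}_R(-,R)$; so you correctly repaired an indexing slip in the statement rather than introducing one.
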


\begin{proof}
 $(1) \Rightarrow (2)$ First of all note that $\dim R/I= \dim R/I^t=\dim R/\varphi^{t}(I)R$
for all $t \in \mathbb{N}$. Since $R/I$ is a Cohen-Macaulay ring, by
virtue of Lemma \ref{EABR},
 $\dim R/I= \depth R/I=\depth R/\varphi^{t}(I)R $ for all $t \in \mathbb{N}$. The
 first part follows from Theorem \ref{PS Generalize} and the Auslander-Buchsbaum Theorem. For the second part consider
  the short exact sequence
  $$0 \rightarrow \varphi^{t}(I)R/\varphi^{t+1}(I)R \rightarrow
   R/\varphi^{t+1}(I)R \rightarrow R/\varphi^{t}(I)R \rightarrow 0.$$ Applying $H^{i}_{\fm}(-)$ and
    use the Grothendieck's Vanishing Theorem and the fact that
    $\dim \varphi^{t}(I)R/\varphi^{t+1}(I)R \leq r$ we obtain an
     epimorphism $H^r_{\fm}(R/\varphi^{t+1}(I)R) \longrightarrow H^r_{\fm}(R/\varphi^t(I)R)$. Now, by local duality \cite[11.2.5]{BroSha} this surjection is equivalent to the surjection
\[
\Ext_R^{\dim (R)-r}(R/\varphi^{t+1}(I)R,R)^{\vee}\xymatrix{\ar@{->>}[r]& }\Ext_R^{\dim (R)-r}(R/\varphi^{t}(I)R,R)^{\vee}.
\]
Applying to this surjection the Matlis duality functor $(-)^{\vee}$ one gets an injection
\[
\Ext_R^{\dim (R)-r}(R/\varphi^{t}(I)R,R)^{\vee\vee}\hookrightarrow\Ext_R^{\dim (R)-r}(R/\varphi^{t+1}(I)R,R)^{\vee\vee}.
\]
Now, consider the commutative square, where the vertical arrows are the natural homothety maps from a module to its Matlis bidual, and the horizontal arrows are the natural ones:
\[
\xymatrix{\Ext_R^{\dim (R)-r}(R/\varphi^{t}(I)R,R)\ar[d]\ar[r]& \Ext_R^{\dim (R)-r}(R/\varphi^{t+1}(I)R,R)\ar[d]\\
\Ext_R^{\dim (R)-r}(R/\varphi^{t}(I)R,R)^{\vee\vee}\ar[r]& \Ext_R^{\dim (R)-r}(R/\varphi^{t+1}(I)R,R)^{\vee\vee}.}
\]
Notice that the vertical maps are injective, as seen in \cite[10.2.1 and 10.2.2]{BroSha}; moreover, the bottom one is also injective by what we have seeen before, hence the top one is also injective, which is exactly what we wanted to prove.

$(2) \Rightarrow (1)$ By the assumptions $\Ht(I)=\cd(R,I)$, i.e.
$H^i_I(R)=0$ for all $i \neq \Ht(I)$. By local duality one has
\begin{eqnarray*}
 \Hom_R(H^i_I(R),E(R/\fm))  & \cong &\Hom_R(\vil \Ext^i_R(R/I^t,R),E(R/\fm)) \\
   &\cong &  \vpl \Hom_R(\Ext^i_R(R/I^t,R),E(R/\fm))  \\
   &\cong &  \vpl H^{\dim R-i}_{\fm}(R/I^t).
\end{eqnarray*}
It means that for all $i \neq \Ht (I)$,  $ \vpl H^{\dim
R-i}_{\fm}(R/I^t) = 0$. Now, it is enough to prove that $ \vpl
H^{r}_{\fm}(R/I^t) \neq 0$. This follows from the fact that the
homomorphisms
$$H^r_{\fm}(R/\varphi^{t+1}(I)R) \longrightarrow
H^r_{\fm}(R/\varphi^t(I)R)$$ are surjective  of nonzero $R$-modules
for all $t$.
\end{proof}
As immediate consequence of Theorem \ref{CM cond} we obtain the following statement, which recovers and extends \cite[Proposition 3.2]{Eghbali2014}:

\begin{cor}\label{CM cond on prime characteristic}
Let $(R,\fm)$ be a Gorenstein local ring of prime characteristic $p,$ and let $I\subseteq R$ be an ideal of finite projective dimension. Then, we have that $R/I$ is Cohen--Macaulay if and only if $\Ht(I)=\cd(R,I)$ and $\Ext^r_R(R/I^{[p^t]},R) \longrightarrow
  \Ext^r_R(R/I^{[p^{t+1}]},R)$ are injective of nonzero $R$-modules for all $t$ and $r:=\depth R/I$.
\end{cor}

\begin{thrm}\label{vanishing result of local cohomology}
Let $(R,\mathfrak{m})$ be a Gorenstein local ring, let $I\subset R$ be an ideal, and let $\xymatrix@1{R\ar[r]^-{\varphi}& R}$ be a flat ring endomorphism such that $\{\varphi^t(I)R\}_t$ is a descending chain of ideals that is cofinal with $\{I^t\}_t,$ and $\{\varphi^t(\mathfrak{m})R\}_t$ is a descending chain of ideals that is cofinal with $\{\mathfrak{m}^t\}_t.$ Then, for any integer $j\geq 0,$ $H_I^j (R)=0$ if and only if there is an integer $t\gg 0$ such that the map
\[
H_{\mathfrak{m}}^{\dim(R)-j}(R/I)\xymatrix{\ar[r]^-{\varphi^t}& }H_{\mathfrak{m}}^{\dim(R)-j}(R/I)
\]
is zero.
\end{thrm}

\begin{proof}
Since $\varphi$ is flat and $\{\varphi^t(I)R\}_t$ is cofinal with $\{I^t\}_t,$ we have
\[
H_I^j (R)=\vil \Ext_R^j (R/\varphi^t(I)R,R)=\vil \Phi^t\left(\Ext_R^j (R/I,R)\right).
\]
The above equality shows that $H_I^j (R)=0$ if and only if there is an integer $t\gg 0$ such that the natural map
\[
\Ext_R^j (R/I,R)\xymatrix{\ar[r]& }\Phi^t\left(\Ext_R^j (R/I,R)\right)
\]
Now, we apply the Matlis duality functor to this zero map; indeed, on the one hand the source of the map becomes $H_{\mathfrak{m}}^{\dim(R)-j}(R/I)$ just because of Local duality. On the other hand, using the isomorphism $\Phi(E)\cong E$ that is given by the assumption that $\{\varphi^t(\mathfrak{m})R\}_t$ is cofinal with $\{\mathfrak{m}^t\}_t,$ jointly again with Local Duality one can check that the target is
\begin{align*}
& \Phi^t\left(\Ext_R^j (R/I,R)\right)^{\vee}=\Hom_R (\Phi^t\left(\Ext_R^j (R/I,R)\right),\Phi^t E)\\& \cong\Phi^t\Hom_R (\Ext_R^j (R/I,R),E)\cong \Phi^t H_{\mathfrak{m}}^{\dim(R)-j}(R/I).
\end{align*}
Summing up, the above zero map implies (check) that the map $\varphi^t$ on $H_{\mathfrak{m}}^{\dim(R)-j}(R/I)$ is also zero, which is what we finally wanted to prove.
\end{proof}

\begin{remark}
Notice that the proof of the above Theorem is just the one sketched in \cite[Theorem 22.1]{Twentyfourhours} for the case of the Frobenius map in prime characteristic in a regular local ring, see \cite[Theorem 1.1]{Lyu06} for full details. It can be regarded as a mild generalization of \cite[Theorem 4.1]{S-W}.
\end{remark}

\section{SET-THEORETICALLY COHEN-MACAULAY IDEALS}\label{set theoretically C-M ideals}

It is known that the radical of a Cohen-Macaulay ideal need not to
be Cohen-Macaulay in general. A well-known evidence is an example
due to Hartshorne \cite{Ha} shows that whenever $k$ is a field of
positive prime characteristic, the ideal $$I= \ker (\varphi: k[x_1, x_2,
x_3, x_4] \rightarrow k[s^4, s^3t,st^3, t^4])$$ via $x_1 \mapsto
s^4, x_2 \mapsto s^3t, x_3 \mapsto st^3, x_4 \mapsto t^4$ is a
non-Cohen-Macaulay set-theoretic complete intersection. However,
there are some Cohen-Macaulay ideals having the same property for
their radicals. For instance, if $I$ is a Cohen-Macaulay monomial
ideal of a polynomial ring $R = k[x_1, \ldots, x_n]$ over a field
$k$, then $\sqrt{I}$ is Cohen-Macaulay (cf. \cite[Theorem
2.6]{HTT}). Furthermore, principal and Veronese monomial ideals have
such a property, \cite[Theorem 3.2]{HH}.

Let us recall the following definition in order to strengthen the
above results.

\begin{definition}  Let $(R,\fm)$ be a regular local ring. An ideal $I \subset R$ is
called set-theoretically Cohen-Macaulay if there exists an ideal $J
\subset R$ with $\sqrt{I} = \sqrt{J}$ such that the ring $R/J$ is
Cohen-Macaulay.
\end{definition}

It is clear that Cohen-Macaulay radical ideals are set-theoretically
Cohen- Macaulay. It should be noted that there exist also
set-theoretic Cohen-Macaulay ideals which are not Cohen-Macaulay.
Suppose $R=k[\![x,y]\!]$ is a formal power series ring over a field $k$
of $x,y$. Put $I=(y) \cap(xy,y^2)$ and $J=\sqrt{I}$. It is clear that
$R/J$ is Cohen-Macaulay but this is not the case for $R/I$.

Our first result in this direction is a consequence of Theorem
\ref{CM cond} stating that under the assumptions given in
Proposition \ref{equivalence} the concepts of Cohen-Macaulayness and
set-theoretically Cohen-Macaulayness are the same.

\begin{prop} \label{equivalence} Let $(R,\fm)$ be a regular local ring with a flat ring endomorphism $\varphi: R \rightarrow
R$. Further suppose that triples $(R,I,\varphi)$ and $(R,J,\varphi)$
have (Inj) property where $\sqrt{I}=\sqrt{J}$. Then $R/I$ is
Cohen-Macaulay if and only if $R/J$ is Cohen-Macaulay.
\end{prop}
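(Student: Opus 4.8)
The plan is to deduce the statement from Theorem \ref{Main vanishing} by exploiting the one feature that distinguishes $H^i_I(R)$ from $H^i_{\fm}(R/I)$: local cohomology in the defining-ideal variable depends only on the radical of that ideal. First I would reduce to the complete case by passing to $\hat R$, which preserves regularity, preserves flatness of the induced endomorphism (via Lemma \ref{Matsumura}), and preserves the (Inj) property; thus $R$ may be taken to be a complete regular, hence Gorenstein, local ring, so that Theorem \ref{Main vanishing} is available for both triples $(R,I,\varphi)$ and $(R,J,\varphi)$. Their hypotheses hold: (Flat) follows from flatness of $\varphi$, (Inj) is assumed, and the remaining radical/cofinality condition $\rad(\varphi(\fm)R)=\rad(\fm)$ is in force under the standing assumptions. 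Two elementary facts then set the stage: since $\rad(I)=\rad(J)$ we have $H^i_I(R)\cong H^i_{\rad(I)}(R)=H^i_{\rad(J)}(R)\cong H^i_J(R)$ for every $i$, and $\Supp(R/I)=\Supp(R/J)$ gives $\dim R/I=\dim R/J=:d$.

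Next I would record the cohomological criterion for Cohen-Macaulayness. Writing $n=\dim R$, Grothendieck vanishing gives $H^i_{\fm}(R/I)=0$ for $i>d$, while $H^d_{\fm}(R/I)\neq 0$; hence $R/I$ is Cohen-Macaulay precisely when $H^i_{\fm}(R/I)=0$ for all $i<d$. Applying Theorem \ref{Main vanishing} to the triple $(R,I,\varphi)$ index by index, this is equivalent to $H^{n-i}_I(R)=0$ for all $i<d$, that is, to $H^j_I(R)=0$ for all $j>n-d$.

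Finally I would transport this condition to $J$. By the radical invariance recorded above, $H^j_I(R)=0$ for all $j>n-d$ if and only if $H^j_J(R)=0$ for all $j>n-d$; and running the previous paragraph in reverse for the triple $(R,J,\varphi)$, again through Theorem \ref{Main vanishing} and $\dim R/J=d$, shows the latter is equivalent to the Cohen-Macaulayness of $R/J$. Chaining these equivalences gives that $R/I$ is Cohen-Macaulay if and only if $R/J$ is Cohen-Macaulay. I do not expect any single step to be difficult once Theorem \ref{Main vanishing} is granted; the only genuine subtlety, and the point on which the whole argument turns, is that while $H^\bullet_I(R)$ is insensitive to replacing $I$ by $\rad(I)$, the modules $H^\bullet_{\fm}(R/I)$ are \emph{not} (as the example $I=(y)\cap(xy,y^2)$ already illustrates), so the (Inj) and Gorenstein hypotheses feeding Theorem \ref{Main vanishing} are exactly what is needed to detect Cohen-Macaulayness through the radical-invariant family $H^\bullet_I(R)$. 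Some care must also be taken to verify that the hypotheses of Theorem \ref{Main vanishing} survive the passage to the completion for both triples.
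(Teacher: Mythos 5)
Your proposal follows essentially the same route as the paper: deduce everything from Theorem \ref{Main vanishing}, using that $H^\bullet_I(R)$ depends only on $\rad(I)$ (so $H^j_I(R)\cong H^j_J(R)$ for all $j$) together with $\dim R/I=\dim R/J$, and chain the two equivalences. Your index-by-index chaining through the Cohen--Macaulay criterion $H^i_{\fm}(R/I)=0$ for $i<\dim R/I$ is exactly what the paper leaves implicit in its closing ``Now we are done by Theorem \ref{Main vanishing}.''

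There is, however, one step you wave through, and it is precisely the step that constitutes the substantive content of the paper's proof: hypothesis (c) of Theorem \ref{Main vanishing}, namely $\rad(\varphi(\fm)R)=\rad(\fm)$, is \emph{not} among the standing assumptions of Section 2 (those only concern cofinality of $\{\varphi^t(I)R\}_{t\geq 0}$ with $\{I^t\}_{t\geq 0}$), so your claim that it ``is in force under the standing assumptions'' is unjustified as written. It does hold, but it requires an argument: since $\varphi$ is a flat \emph{local} endomorphism, the dimension formula for flat local maps gives $\dim R=\dim R+\dim R/\varphi(\fm)R$, hence $\dim R/\varphi(\fm)R=0$ and $\varphi(\fm)R$ is $\fm$-primary. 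This is the first (and essentially only) thing the paper proves. A second, harmless, inefficiency in your write-up: the passage to the completion is unnecessary, since Theorem \ref{Main vanishing} is stated for Gorenstein local rings and a regular local ring is already Gorenstein; dropping that step also removes the need for the ``care'' you flag about transporting the (Inj) hypothesis and the equality of radicals to $\hat{R}$.
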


\begin{proof} First note that as $(R,\fm)$ is a regular local ring with a flat local ring endomorphism $\varphi: R \rightarrow
R$, the dimension formula $\dim R + \dim R/\varphi(\fm)R = \dim R$
implies that $\varphi(\fm)R$ is $\fm$-primary. Then, since $R$ is a
Noetherian ring and $\sqrt{I}=\sqrt{J},$ thus $\{I^t\}_{t \geq 0}$ and
$\{J^t\}_{t \geq 0}$ are cofinal. Now we are done by Theorem \ref{CM cond}.
\end{proof}

As non trivial consequence of Proposition \ref{equivalence}, we may recover \cite[Lemma 3.1]{S-W2}.

\begin{cor} (Huneke) Let $(R,\fm)$ be a regular local ring of positive characteristic $p>0$ and $I$ an
 ideal of $R$. If the ring  $R/I$ is $F$-pure, and $I$ is set--theoretically Cohen--Macaulay, then $I$ is a Cohen-Macaulay ideal.
\end{cor}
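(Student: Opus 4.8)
The plan is to argue by contradiction and reduce everything to Proposition~\ref{equivalence}, taking for $\varphi$ the Frobenius endomorphism $F\colon R\to R$, $r\mapsto r^p$. So suppose, contrary to the claim, that $I$ is set-theoretically Cohen-Macaulay; then there is an ideal $J\subseteq R$ with $\rad(I)=\rad(J)$ and $R/J$ Cohen-Macaulay. Since $R$ is regular of characteristic $p$, Kunz's theorem makes $F$ a flat (and clearly local) endomorphism, so both triples $(R,I,F)$ and $(R,J,F)$ have the (Flat) property; moreover, $R$ being normal with $F(I)\subseteq I$, Discussion~\ref{discuss} shows that $\{F^t(I)R\}_{t\ge0}=\{I^{[p^t]}\}_{t\ge0}$ is cofinal with $\{I^t\}_{t\ge0}$ (and similarly for $J$). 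My goal is to verify the remaining hypotheses of Proposition~\ref{equivalence}, after which that proposition will force $R/I$ to be Cohen-Macaulay, contradicting the assumption.

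First I would settle the (Inj) property for $(R,I,F)$. By hypothesis $R/I$ is $F$-pure, which by Definition~\ref{pure} says precisely that $\bar F\colon R/I\to R/I$ is pure. Since $R$ is regular and $(R,I,F)$ has (Flat), Remark~\ref{pure2} then delivers the (Inj) property for $(R,I,F)$ at once.

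The main obstacle is the (Inj) property for $(R,J,F)$, which must be extracted from Cohen-Macaulayness of $R/J$ rather than from purity. Here I would argue as in the proofs of Proposition~\ref{2} and Theorem~\ref{CM cond}. Writing $n=\dim R$ and $d=\dim R/J=\depth R/J$, the exactness of $\Phi^t=F^t_{\ast}R\otimes_R(-)$ (flatness of $F$) transports the vanishing $H^j_{\fm}(R/J)=0$ for $j\neq d$ to $H^j_{\fm}(R/J^{[p^t]})\cong\Phi^t\!\big(H^j_{\fm}(R/J)\big)=0$ for $j\neq d$ and all $t$; thus every $R/J^{[p^t]}$ is again Cohen-Macaulay of dimension $d$. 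For the single surviving degree I would apply $H^\bullet_{\fm}(-)$ to
$$0\to J^{[p^t]}/J^{[p^{t+1}]}\to R/J^{[p^{t+1}]}\to R/J^{[p^t]}\to 0,$$
and, since $\dim\big(J^{[p^t]}/J^{[p^{t+1}]}\big)\le d$, Grothendieck's vanishing theorem kills $H^{d+1}_{\fm}\big(J^{[p^t]}/J^{[p^{t+1}]}\big)$ and yields surjections $H^d_{\fm}(R/J^{[p^{t+1}]})\twoheadrightarrow H^d_{\fm}(R/J^{[p^t]})$ for all $t$. Passing to the completion (legitimate as in the proofs above, $R$ being regular hence Gorenstein) and dualizing via Grothendieck local duality turns these surjections into injections $\Ext^{n-d}_R(R/J^{[p^t]},R)\hookrightarrow\Ext^{n-d}_R(R/J^{[p^{t+1}]},R)$, while in every degree $\neq n-d$ the relevant $\Ext$ modules vanish (Cohen-Macaulayness of $R/J^{[p^t]}$); hence $(R,J,F)$ has the (Inj) property.

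With (Flat) for both triples, (Inj) for both triples, and $\rad(I)=\rad(J)$ in hand, Proposition~\ref{equivalence} applies and gives that $R/I$ is Cohen-Macaulay if and only if $R/J$ is. As $R/J$ is Cohen-Macaulay, so is $R/I$, contradicting the hypothesis that $R/I$ is not Cohen-Macaulay. Therefore no such $J$ can exist, i.e. $I$ is not set-theoretically Cohen-Macaulay. I expect the delicate point to be the (Inj) verification for the Cohen-Macaulay ideal $J$ --- specifically the interplay of Frobenius flatness (used to propagate Cohen-Macaulayness to all Frobenius powers $J^{[p^t]}$) with the local-duality bookkeeping that converts surjectivity of local-cohomology maps into the required injectivity of the $\Ext$ maps.
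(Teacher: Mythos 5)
Your proof is correct, and its skeleton --- argue by contradiction, take $\varphi=F$ the Frobenius, use Kunz for (Flat), Remark \ref{pure2} for (Inj) on the $F$-pure side, then invoke Proposition \ref{equivalence} --- is exactly the reduction the paper has in mind (the paper offers no proof beyond calling the corollary an immediate consequence of Proposition \ref{equivalence}). Where you genuinely depart is in noticing that Proposition \ref{equivalence}, as stated, also demands the (Inj) property for the auxiliary triple $(R,J,F)$, which $F$-purity of $R/I$ does not supply, and in providing it yourself: you propagate Cohen--Macaulayness to all Frobenius powers $R/J^{[p^t]}$ by flatness, extract surjections $H^d_{\fm}(R/J^{[p^{t+1}]})\twoheadrightarrow H^d_{\fm}(R/J^{[p^t]})$ from the short exact sequences together with Grothendieck vanishing, and dualize; this is precisely the mechanism of the implication $(1)\Rightarrow(2)$ in Theorem \ref{CM cond}, and it is sound (injectivity of maps of finitely generated modules descends from the faithfully flat completion). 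What this detour buys is a use of Proposition \ref{equivalence} literally as stated; what it costs is work that can be avoided entirely. Writing $d=\dim R/I=\dim R/J$, only the direction ``$R/J$ Cohen--Macaulay $\Rightarrow$ $R/I$ Cohen--Macaulay'' is needed, and for that one can use the two halves of Theorem \ref{Main vanishing} asymmetrically: Proposition \ref{2}, which requires only (Flat), turns $H^j_{\fm}(R/J)=0$ for $j\neq d$ into $H^{i}_{J}(R)=H^{i}_{I}(R)=0$ for $i\neq \dim R-d$ (the ideals have the same radical), and then Proposition \ref{1}, whose (Inj) hypothesis is exactly what $F$-purity of $R/I$ gives via Remark \ref{pure2}, turns this into $H^{j}_{\fm}(R/I)=0$ for $j\neq d$, i.e.\ $R/I$ is Cohen--Macaulay, a contradiction. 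So the (Inj) property for $J$ is never actually needed; your verification of it is correct, and is an interesting observation in its own right (Cohen--Macaulay ideals in regular rings of characteristic $p$ automatically satisfy (Inj) for the Frobenius), but it is an independent addition rather than a necessary step.
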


\begin{proof}
Since $I$ is a set-theoretically Cohen-Macaulay ideal, there is an ideal $J\subseteq R$ with $\sqrt{J}=\sqrt{I}$ such that $R/J$ is Cohen--Macaulay. Now, denoting by $F$ the Frobenius endomorphism of $R,$ on the one hand the triple $(R,I,F)$ has (Inj) property because $R/I$ is $F$--pure by assumption; on the other hand, the triple $(R,J,F)$ also has (Inj) property as consequence of Corollary \ref{CM cond on prime characteristic}. Keeping in mind all these facts, we conclude, thanks to Proposition \ref{equivalence}, that $I$ is Cohen--Macaulay.
\end{proof}

 According to Discussion \ref{Hartshorne} in Section \ref{on a result of Peskine-Szpiro}, we observe that
Cohen-Macaulayness of $R/I$ is not
  equivalent to the ideal $I$ being set-theoretically complete intersection. On the other
  hand, set-theoretically complete intersection ideals are
  set-theoretically Cohen-Macaulay but the converse is no longer true. See for instance \cite{S-W2}.
 We show that instead of set-theoretically complete intersection ideals one may regard cohomologically
 complete intersection ideals.

Next result (Theorem \ref{setCM-setCI}) shows the relation between
set-theoretically Cohen-Macaulay and cohomologically complete
intersection ideals. Notice that a set-theoretically complete
intersection ideal is a cohomologically complete intersection one
but the converse is no longer true.




\begin{thrm} \label{setCM-setCI} Let $(R,\fm)$ be a  regular local ring and the triple $(R, I, \varphi)$ has (Flat) property.
 Suppose that $I$ is a set-theoretically Cohen-Macaulay
 ideal such that there is an ideal $J\subseteq R$ with $\sqrt{I}=\sqrt{J}$ such that $R/J$ is Cohen-Macaulay and $\{\varphi^t (J)R\}_t$ is cofinal with respect to $\{I^t\}_t.$ Then, one has $\Ht(I)=\cd (R,I)$.
\end{thrm}

\begin{proof} Preserving the assumptions and notations established above, we know that
$\sqrt{I}=\sqrt{J}$ and $H^i_{\fm}(R/J)=0$ for all $i \neq \dim R/I$.
As $\varphi:R \rightarrow R$ is a flat ring endomorphism, then so does all its iterations. By applying the functor $\Phi^t$ to $H^i_{\fm}(R/J)$, one has $H^i_{\fm}(R/\varphi^t(J)R)=0$ for all integer $t$ and all $i \neq \dim R/I.$ This implies, by local duality, that
\[
\Ext_R^{\dim (R)-i} (R/\varphi^t(J)R,R)^{\vee}=0\text{ for all }\dim (R)-i\neq\dim (R)-\dim (R/I),
\]
hence $\Ext_R^{\dim (R)-i} (R/\varphi^t(J)R,R)=0$ for the same range of values because of the faithfulness of Matlis duality. Finally, combining this vanishing of Ext's modules jointly with the fact that $\{\varphi^t (J)R\}_t$ is cofinal with respect to $\{I^t\}_t$ one has 
\[
H^j_{I}(R)=\vil_t \Ext_R^j (R/\varphi^t (J)R,R)=0\text{ for all }j \neq \Ht (I)
\]
It implies that $\Ht(I)=\cd(R,I).$
\end{proof}

Next result, which is a non--completely obvious consequence of Theorem \ref{setCM-setCI} and Theorem \ref{PS Generalize}, recovers and extends \cite[Corollary 4.3]{Eghbali2014}.

\begin{cor} \label{equi.setCM-setCI} Let $R=k[x_1, \ldots, x_n]$ be a polynomial ring over a field $k$
 and let $\fm=(x_1, \ldots, x_n)$ be the maximal ideal.
 Suppose that $I$ is an ideal such that $\sqrt{I}$ is a square free monomial ideal.
 Then the following statements are equivalent.
 \begin{enumerate}[(a)]
\item $I$ is a set-theoretically Cohen-Macaulay ideal;
\item $I$ is a cohomologically complete intersection ideal.
\end{enumerate}
\end{cor}

\begin{proof}
Let $\varphi$ be the $k$--algebra endomorphism that maps any variable $x_i$ to its square, which is a flat map; moreover, we also know that, for any ideal $J\subseteq R,$ $\{\varphi^t (J)R\}$ is cofinal with respect to $\{J^t\}_t.$ Now, we can appeal to Theorem \ref{setCM-setCI} to conclude that (a) implies (b).

To prove the reverse statement, it is enough to show that
$H^i_{\fm}(R/\sqrt{I})=0$ for all $i \neq \dim R/I$. First of all, notice that, since the map $\varphi$ induced on $R/\sqrt{I}$ is pure, one has that the triple $(R,\sqrt{I},\varphi)$ satisfies the (Inj) property (cf. Remark \ref{pure2}). In this way, the result follows immediately from Theorem \ref{PS Generalize}.
\end{proof}

The equivalence between the cohomologically complete intersection
and set-theoretically Cohen-Macaulay ideals in Theorem
\ref{setCM-setCI} holds whenever $R/\sqrt{I}$ is pure and flat.

We conclude this section with an investigation on the cohomological
dimension of the intersection of set-theoretically Cohen-Macaulay
ideals. Before it, we need the following Lemma.

\begin{lem} \label{intersec} Let $(R,\fm)$ be a regular local ring and $I$, $J$
 two ideals of $R$ with a flat ring endomorphism  $\varphi: R \rightarrow R$
  such that $\sqrt{I}=\sqrt{\varphi(I)R}$ and $\sqrt{J}=\sqrt{\varphi(J)R}.$
  Then $\sqrt{I \cap J}=\sqrt{\varphi(I \cap J)R}.$
\end{lem}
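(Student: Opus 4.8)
The plan is to prove $\rad(I \cap J) = \rad(\varphi(I \cap J)R)$ by establishing both inclusions separately, using the two hypotheses $\rad(I) = \rad(\varphi(I)R)$ and $\rad(J) = \rad(\varphi(J)R)$ together with the elementary fact that $\rad(I \cap J) = \rad(I) \cap \rad(J)$ in any commutative ring. First I would record this standard identity: an element $x$ lies in $\rad(I \cap J)$ if and only if some power $x^n \in I \cap J$, which holds exactly when $x^n \in I$ and $x^n \in J$ for a common $n$, and this is equivalent to $x \in \rad(I) \cap \rad(J)$. Thus $\rad(I \cap J) = \rad(I) \cap \rad(J)$, and likewise $\rad(\varphi(I \cap J)R)$ can be compared against $\rad(\varphi(I)R) \cap \rad(\varphi(J)R)$.

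For the inclusion $\rad(I \cap J) \subseteq \rad(\varphi(I \cap J)R)$, I would take $x \in \rad(I \cap J)$, so $x^n \in I \cap J$ for some $n$. Applying $\varphi$ gives $\varphi(x^n) = \varphi(x)^n \in \varphi(I \cap J)R$. Now the point is that $x \in \rad(\varphi(I \cap J)R)$ follows once we know $\varphi$ is compatible with radicals in the appropriate direction; more precisely, since $\varphi$ is flat (hence satisfies going-down, cf. Example \ref{flat}) and $\rad(I)=\rad(\varphi(I)R)$, $\rad(J)=\rad(\varphi(J)R)$ are assumed, the generating data for $I \cap J$ transfer. The cleaner route is to argue directly through the two hypotheses: from $x \in \rad(I \cap J) = \rad(I) \cap \rad(J) = \rad(\varphi(I)R) \cap \rad(\varphi(J)R) = \rad(\varphi(I)R \cap \varphi(J)R)$, and then reduce to showing $\rad(\varphi(I)R \cap \varphi(J)R) = \rad(\varphi(I \cap J)R)$.

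So the crux reduces to the equality $\rad(\varphi(I \cap J)R) = \rad(\varphi(I)R \cap \varphi(J)R)$, together with the already-used identity $\rad(I)\cap\rad(J) = \rad(\varphi(I)R)\cap\rad(\varphi(J)R)$ coming from the two hypotheses. One inclusion, $\varphi(I \cap J)R \subseteq \varphi(I)R \cap \varphi(J)R$, is immediate since $I \cap J \subseteq I$ and $I \cap J \subseteq J$, giving $\rad(\varphi(I \cap J)R) \subseteq \rad(\varphi(I)R \cap \varphi(J)R) = \rad(\varphi(I)R) \cap \rad(\varphi(J)R)$, and this chain already yields $\rad(\varphi(I \cap J)R) \subseteq \rad(I) \cap \rad(J) = \rad(I \cap J)$. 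For the reverse inclusion $\rad(I \cap J) \subseteq \rad(\varphi(I \cap J)R)$, I would take $x$ with $x^n \in I \cap J$, apply $\varphi$ to get $\varphi(x)^n = \varphi(x^n) \in \varphi(I \cap J)R$, and then I need to pass from $\varphi(x)$ being in this radical back to $x$ itself.

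The main obstacle is precisely this last passage: moving from a power of $\varphi(x)$ lying in $\varphi(I \cap J)R$ to $x$ lying in $\rad(\varphi(I \cap J)R)$. The hypotheses $\rad(I)=\rad(\varphi(I)R)$ are exactly what make this work for the individual ideals, and the mechanism is the one already exploited in Lemma \ref{cofinal}: the flatness of $\varphi$ (via going-down, as in Discussion \ref{discuss}) forces $\rad(\varphi(K)R) \cap R$ to pull back correctly. I expect the intended argument parallels the Lemma \ref{cofinal} proof structure, handling membership element-by-element, and the only delicate point is verifying that the radical of the intersection of the two expanded ideals coincides with the expansion of the intersection's radical — which follows once both hypotheses are invoked to replace each $\rad(\varphi(\cdot)R)$ by the corresponding $\rad(\cdot)$ and then reassemble via $\rad(I)\cap\rad(J)=\rad(I\cap J)$.
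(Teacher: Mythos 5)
Your reduction is set up correctly: like the paper, you boil everything down to comparing $\rad(\varphi(I\cap J)R)$ with $\rad(\varphi(I)R)\cap\rad(\varphi(J)R)$, and your easy inclusion $\rad(\varphi(I\cap J)R)\subseteq\rad(I)\cap\rad(J)=\rad(I\cap J)$ is fine. But there is a genuine gap at exactly the point you flag as ``the main obstacle'': you never prove $\rad(I\cap J)\subseteq\rad(\varphi(I\cap J)R)$. Your element-wise attempt cannot work as written --- from $x^n\in I\cap J$ you get $\varphi(x)^n\in\varphi(I\cap J)R$, which is a statement about $\varphi(x)$, not about $x$, and there is no general way to descend from $\varphi(x)$ to $x$. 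Your closing sentence is circular: invoking the hypotheses to replace $\rad(\varphi(I)R)$ by $\rad(I)$ and $\rad(\varphi(J)R)$ by $\rad(J)$ only re-derives $\rad(\varphi(I)R)\cap\rad(\varphi(J)R)=\rad(I\cap J)$; it says nothing about how that set sits inside $\rad(\varphi(I\cap J)R)$, which is the inclusion actually at issue. Neither going-down nor the mechanism of Lemma \ref{cofinal} supplies it: Lemma \ref{cofinal} transfers an equality $\rad(I)=\rad(J)$ across $\varphi$, which is a different shape of statement.

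The missing ingredient --- and the paper's entire one-line proof --- is that flatness of $\varphi$ makes extension of ideals commute with finite intersection: $\varphi(I\cap J)R=\varphi(I)R\cap\varphi(J)R$. (Tensor the exact sequence $0\to I\cap J\to R\to R/I\oplus R/J$ with $R$ viewed as an $R$-algebra via $\varphi$; flatness preserves exactness, and the kernel of $R \to R/\varphi(I)R\oplus R/\varphi(J)R$ is $\varphi(I)R\cap\varphi(J)R$.) With this identity, $\rad(\varphi(I\cap J)R)=\rad(\varphi(I)R\cap\varphi(J)R)=\rad(\varphi(I)R)\cap\rad(\varphi(J)R)=\rad(I)\cap\rad(J)=\rad(I\cap J)$, and the lemma follows. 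Incidentally, one can also close your gap without flatness: since $IJ\subseteq I\cap J$ and $\varphi(IJ)R=\varphi(I)R\cdot\varphi(J)R$ for an arbitrary ring map, one has $\rad(I\cap J)=\rad(\varphi(I)R)\cap\rad(\varphi(J)R)=\rad\bigl(\varphi(I)R\cdot\varphi(J)R\bigr)=\rad(\varphi(IJ)R)\subseteq\rad(\varphi(I\cap J)R)$, which together with your easy inclusion finishes the proof; but as it stands your argument stops short of establishing the key inclusion by any route.
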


\begin{proof}
To prove, it is enough to note that by the flatness of $\varphi$,
one has $\varphi(I \cap J)R=\varphi(I)R \cap \varphi(J)R$.
\end{proof}

\begin{prop}\label{an upper bound of cohomological dimension}
Let $(R,\fm)$ be a  regular local ring and the triples $(R,I,\varphi)$
and $(R,J,\varphi)$ have  (Flat) property where $\sqrt{I},\ \sqrt{J}$
are Cohen-Macaulay ideals with the same dimension $d$. Further
suppose that $\sqrt{I} \cap \sqrt{J} =\sqrt{I} \sqrt{J}$ and $d >\depth
R/(\sqrt{I}+\sqrt{J}).$  Then $\cd (R, I \cap J) \leq
\Ht(I)+\Ht(J)-1.$
\end{prop}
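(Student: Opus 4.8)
The plan is to run the Mayer--Vietoris sequence for local cohomology against the two ideals and to feed into it the three numbers $\cd(R,I)$, $\cd(R,J)$ and $\cd(R,I+J)$. Since $\cd$ depends only on the radical, set $\fa=\rad(I)$ and $\fb=\rad(J)$, so that $\rad(I\cap J)=\fa\cap\fb$ and $\rad(I+J)=\rad(\fa+\fb)$, and recall the long exact sequence
$$\cdots\to H^i_{\fa}(R)\oplus H^i_{\fb}(R)\to H^i_{\fa\cap\fb}(R)\to H^{i+1}_{\fa+\fb}(R)\to\cdots.$$
From this it suffices to show that $H^i_{\fa}(R)=H^i_{\fb}(R)=0$ for $i>\Ht(I)=\Ht(J)$ and that $H^i_{\fa+\fb}(R)=0$ for $i>\Ht(I)+\Ht(J)$. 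Indeed, writing $h:=\Ht(I)=\Ht(J)$ and granting these, for every $i\ge 2h$ the left-hand term vanishes (as $i>h$) and $H^{i+1}_{\fa+\fb}(R)$ vanishes (as $i+1>2h$), forcing $H^i_{\fa\cap\fb}(R)=0$; hence $\cd(R,I\cap J)=\cd(R,\fa\cap\fb)\le 2h-1=\Ht(I)+\Ht(J)-1$, and the required "$-1$" is produced automatically by the degree shift in the sequence.

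First I would dispose of the two outer ideals. Because $\rad(I)$ is a Cohen--Macaulay ideal, $I$ is set-theoretically Cohen--Macaulay, and since $(R,I,\varphi)$ has the (Flat) property, Theorem \ref{setCM-setCI} yields $\Ht(I)=\cd(R,I)$; the same applies to $J$. As $R/\rad(I)$ is Cohen--Macaulay of dimension $d$ it is equidimensional, so in the regular (hence catenary) domain $R$ one has $\Ht(I)=\dim R-d=\Ht(J)=:h$. This gives $H^i_{\fa}(R)=H^i_{\fb}(R)=0$ for all $i>h$, which is the first input above.

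The crux is the bound $\cd(R,\fa+\fb)\le 2h$. I would first locate the height: since $R$ is regular, Serre's intersection inequality gives $\dim R/(\fa+\fb)\ge \dim R/\fa+\dim R/\fb-\dim R=2d-\dim R$, whence, again by catenarity, $\Ht(\fa+\fb)=\dim R-\dim R/(\fa+\fb)\le 2(\dim R-d)=2h$. It remains to upgrade this to cohomological dimension, i.e. to see that $\fa+\fb$ behaves like a cohomological complete intersection so that $\cd(R,\fa+\fb)$ does not exceed $\Ht(\fa+\fb)$. This is where the two standing hypotheses enter: the identity $\rad(I)\cap\rad(J)=\rad(I)\rad(J)$ (equivalently $\operatorname{Tor}^R_1(R/\fa,R/\fb)=0$), fed through the always-exact sequence $0\to R/(\fa\cap\fb)\to R/\fa\oplus R/\fb\to R/(\fa+\fb)\to 0$ together with $d>\depth R/(\fa+\fb)$, pins down the depths of the three quotients and in particular forces $\depth R/(\fa\cap\fb)=\depth R/(\fa+\fb)+1$; combined with the flatness data of Lemma \ref{intersec} (which guarantees $(R,\fa+\fb,\varphi)$ and $(R,\fa\cap\fb,\varphi)$ are again cofinal triples) this is meant to produce $\cd(R,\fa+\fb)\le 2h$ via the local-duality/inverse-limit computation used in Proposition \ref{2}.

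I expect this last step to be the main obstacle. The clean, hypothesis-free route is the subadditivity $\cd(R,\fa+\fb)\le\cd(R,\fa)+\cd(R,\fb)=2h$, which follows from the Grothendieck spectral sequence $H^p_{\fa}(H^q_{\fb}(R))\Rightarrow H^{p+q}_{\fa+\fb}(R)$ together with the general fact that $\cd(\fa,N)\le\cd(R,\fa)$ for every module $N$; granting it, the two extra hypotheses are not needed for the inequality and Mayer--Vietoris closes the proof at once. The genuine work, if one insists on an argument internal to the paper's local-duality framework, is to check that the \emph{Tor}-vanishing and depth hypotheses really bound $\cd(R,\fa+\fb)$ from \emph{above} by $2h$ (and not merely $\Ht(\fa+\fb)$), and to track the cofinality of the chains $\{\varphi^t(\fa+\fb)R\}_{t\ge0}$ needed for the $\vpl_t$ step. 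The degenerate case $h=0$ and the boundary degree $i=2h$ are the only remaining bookkeeping points.
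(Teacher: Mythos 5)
Your proposal is correct, but it proves the inequality by a genuinely different route than the paper. The step you share with the paper is the first one: Theorem \ref{setCM-setCI} applied to $I$ and to $J$ gives $\cd(R,\fa)=\cd(R,\fb)=h:=\dim R-d$, where $\fa=\rad(I)$, $\fb=\rad(J)$. After that you diverge: your ``clean route'' bounds $\cd(R,\fa+\fb)\le 2h$ by subadditivity of cohomological dimension (the composite-functor spectral sequence $H^p_{\fa}(H^q_{\fb}(R))\Rightarrow H^{p+q}_{\fa+\fb}(R)$ together with the standard fact that $H^p_{\fa}(N)=0$ for every module $N$ once $p>\cd(R,\fa)$), and Mayer--Vietoris then produces the $-1$; this is a complete and valid argument, and the one point you left as ``bookkeeping'', namely $h\ge 1$, is indeed forced by the hypotheses (if $h=0$ then, $R$ being a domain, $\fa=\fb=0$, so $\depth R/(\fa+\fb)=d$, contradicting $d>\depth R/(\fa+\fb)$). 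The paper, by contrast, never treats $\fa+\fb$ cohomologically: it applies its generalized Peskine--Szpiro theorem (Theorem \ref{PS Generalize}, legitimized for $I\cap J$ by Lemma \ref{intersec}) to get $\cd(R,I\cap J)\le\dim R-\depth R/(\fa\cap\fb)$, then uses the depth lemma on $0\to R/(\fa\cap\fb)\to R/\fa\oplus R/\fb\to R/(\fa+\fb)\to 0$ (this is exactly where $d>\depth R/(\fa+\fb)$ enters) to get $\depth R/(\fa\cap\fb)\ge\depth R/(\fa+\fb)+1$, and finally computes $\depth R/(\fa+\fb)=2d-\dim R$ on the nose: the hypothesis $\fa\cap\fb=\fa\fb$ means $\Tor_1^R(R/\fa,R/\fb)=0$, Lichtenbaum rigidity kills all higher Tor, and Auslander's depth formula applies. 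So the two hypotheses you suspected were unnecessary for your argument are precisely the ones consumed by the paper's depth-theoretic route, and your worry about the ``internal'' route (bounding $\cd(R,\fa+\fb)$ by local duality, cofinality of $\{\varphi^t(\fa+\fb)R\}_{t\ge 0}$) is moot: the paper takes neither path, and your subadditivity route needs no $\varphi$-structure on $\fa+\fb$ or $\fa\cap\fb$ at all. What each approach buys: yours yields a stronger statement (the inequality under only the (Flat) property, Cohen--Macaulay radicals and nonzero heights; equal dimensions, the Tor condition and the depth condition are not needed), avoiding rigidity entirely; the paper's stays inside the endomorphism/duality framework it develops and extracts sharper intermediate data, namely the exact depth of $R/(\fa+\fb)$ and a depth bound for $R/(\fa\cap\fb)$.
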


\begin{proof} As cohomological dimension is stable under taking radical of ideals, Lemma \ref{intersec} and Theorem \ref{PS Generalize} imply that
$$\cd (R, I \cap J) =\cd (R,\sqrt{I} \cap \sqrt{J}) \leq \dim R -
\depth R/\sqrt{I} \cap \sqrt{J}.$$
Now, we claim that
$$\depth (R/\sqrt{I} \cap \sqrt{J} ) \geq  \depth (R/(\sqrt{I} + \sqrt{J}))+1.$$
Indeed, by \cite[Lemma 8.7 (2)]{Twentyfourhours} we know that
\[
\depth (R/\sqrt{I} \cap \sqrt{J} )\geq\min\{\depth(R/\sqrt{I}\oplus R/\sqrt{J}),\depth (R/\sqrt{I} + \sqrt{J})+1\}.
\]
Moreover, using also \cite[Lemma 8.7 (1)]{Twentyfourhours} we also have
\[
\depth (R/\sqrt{I} \cap \sqrt{J} )\geq\min\{\depth(R/\sqrt{I}),\depth(R/\sqrt{J}),\depth (R/\sqrt{I} + \sqrt{J})+1\}.
\]
From this last upper inequality we deduce, because of the assumption $d >\depth
R/(\sqrt{I}+\sqrt{J}),$ that
\[
\depth (R/\sqrt{I} \cap \sqrt{J} ) \geq  \depth (R/(\sqrt{I} + \sqrt{J}))+1,
\]
and therefore
\begin{equation}\label{1a}
   \cd (R, I \cap J) \leq \dim R - \depth (R/\sqrt{I} +\sqrt{J}) -1.
\end{equation}
On the other hand, our assumption $\sqrt{I} \cap \sqrt{J} =\sqrt{I} \sqrt{J}$ implies that $\Tor^R_1(R/\sqrt{I}, R/\sqrt{J})=0$.
 By rigidity (see either \cite[Corollary 1]{L} or \cite[Corollary 2.5]{CelikbasWiegand2015}) $\Tor^R_i(R/\sqrt{I}, R/\sqrt{J})=0$ for all $i \geq 1$.
  Hence, by \cite[Theorem 1.2]{Aus} the depth formula holds for $(R/\sqrt{I}, R/\sqrt{J}).$ Therefore, one has that
\[
\depth (R/\sqrt{I} + \sqrt{J})=\depth ((R/\sqrt{I})\otimes_R (R/\sqrt{J}))=\depth (R/\sqrt{J})-\pd (R/\sqrt{I}).
\]
Combining this equality jointly with Auslander--Buchsbaum formula for $\pd (R/\sqrt{I})$ one gets
\begin{equation}\label{2a}
  \depth (R/\sqrt{I} + \sqrt{J}) = -\dim R+ \depth (R/\sqrt{I})+ \depth (R/\sqrt{J}).
\end{equation}
 Combining (\ref{1a}) and (\ref{2a}) we conclude that $\cd (R, I \cap J) \leq
\Ht(I)+\Ht(J)-1$.
\end{proof}
We conclude this section with the following:

\begin{remark}
We would like to mention that the upper bound of the cohomological dimension of the intersection of two ideals is far from being sharp, the interested reader may like to consult \cite[Theorem 1.1 and Corollary 1.2]{Lyubeznik2007}, \cite[Theorem 3.8]{DaoTakagi2016}, and the references given therein for further details. For lower bounds, one can consult, for instance, \cite[19.2.8]{BroSha}.
\end{remark}

\section{LINKAGE}\label{section about linkage}

 In  the present section, we consider the concept of linkage. Roughly speaking, the linkage is the study of two subschemes where their
 union has nice properties. Exploiting this concept one may consider a subscheme linked with the second one which one understands better.
To be more precise and from local algebra point of view we recall the definition of two linked ideals.

\begin{definition} Let $I$ and $J$ be two ideals of pure height $g$ of a local Gorenstein ring $(R,\fm)$.
The ideals $I$ and $J$ are (algebraically) linked by a complete intersection
$\underline{x}:=x_1, \ldots, x_g$ with $(\underline{x}) \subseteq I \cap J$  if
 $I/(\underline{x}) \cong \Hom_R(R/J, R/(\underline{x}))$ and $J/(\underline{x}) \cong \Hom_R(R/I, R/(\underline{x}))$.
 We write it as $I \sim_{(\underline{x})} J$. For brevity we often write $I \sim J$ for $I \sim_{(\underline{x})} J$
  when there is no ambiguity about the ideal $(\underline{x})$.
\end{definition}

Ideals $I$ and $J$ are in the same linkage class if there is a
sequence of links $J=I_0 \sim I_1 \sim \ldots \sim I_q= I$. An ideal
$I$ is in the linkage class of a complete intersection if any of the
ideals in the linkage class of $I$ are generated by a regular
sequence. If $q$ is an even integer, we say that $J$ is in the even
linkage class of $I$.

Let $I$ and $J$ be two ideals  linked by a complete intersection $\fc$ of height $g$. Then
$$\Hom_R(R/I, R/\fc) \cong \Hom_{R/\fc}(R/I, R/\fc) \cong \Ext^g_R(R/I,R):=K_{R/I}$$
and
 $$\Hom_R(R/J, R/\fc) \cong \Hom_{R/\fc}(R/J, R/\fc) \cong \Ext^g_R(R/J,R):=K_{R/J}$$
are the canonical modules of $R/I$ resp. $R/J$.

A ring $R$ satisfies Serre's condition $(S_r)$ if for all $\fp \in
\Spec R$,
$$\depth R_{\fp} \geq \min \{r, \dim R_{\fp} \}.$$
For an $n$-dimensional ring, being Cohen-Macaulay is equivalent to
satisfying $(S_n)$.

\begin{lem} \label{Serr charact}  \cite[Theorem 1.14] {Sch98} Let $M$ denote a finitely generated, equidimensional $R$-module with $d = \dim M$, where $R$ is a factor ring of a Gorenstein ring. Then
for an integer $r \geq 1$ the following statements are equivalent:
\begin{enumerate}
   \item[(a)]  $M$ satisfies condition $(S_r)$.

  \item[(b)]  The natural map $M \rightarrow K_{K_M}$ is bijective (resp. injective for $r = 1$) and
  $$H^i_{\fm}(K_M) = 0 \text{ \ for\  all\ } d - r + 2 \leq i < d.$$
\end{enumerate}
\end{lem}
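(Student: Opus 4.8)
The plan is to funnel both implications through a single spectral sequence and read them off by tracking degrees. First I would pass to the completion: Serre's condition, the modules $H^i_\fm(-)$, the canonical module $K_M$ and the natural map $M\to K_{K_M}$ are all preserved and detected under $\fm$-adic completion, so we may assume $R=S/\fa$ with $(S,\fn)$ a complete Gorenstein local ring of dimension $n$; write $c=n-d$ and $E=E_R(R/\fm)$. For each $i$ set $K^i(M):=\Ext^{n-i}_S(M,S)$, so that local duality gives $K^i(M)\cong\Hom_R(H^i_\fm(M),E)$ and $K^d(M)=K_M$, the same notation applying to $K_M$, whose canonical module is $K_{K_M}=\Ext^{c}_S(K_M,S)$. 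I would open by recalling the numerical form of Serre's condition over a Gorenstein base: for the equidimensional module $M$,
$$M\ \text{satisfies}\ (S_r)\iff \dim\Ext^{c+k}_S(M,S)\le d-k-r\ \text{ for all } k\ge 1.$$
This turns the whole lemma into a statement about the dimensions of the deficiency modules $\Ext^{q}_S(M,S)$ with $q>c$.

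The engine is the Grothendieck spectral sequence of the composite functor $\Gamma_\fm\circ\Hom_S(-,S)$,
$$E_2^{p,q}=H^p_\fm\!\left(\Ext^q_S(M,S)\right)\ \Longrightarrow\ \mathcal H^{p+q}\!\left(\mathbf{R}\Gamma_\fm\,\mathbf{R}\!\Hom_S(M,S)\right).$$
The abutment is computed by local duality over $S$: one has $\mathbf{R}\Gamma_\fm\,\mathbf{R}\!\Hom_S(M,S)\simeq M^\vee[-n]$ with $M^\vee=\Hom_R(M,E)$, so it is the Matlis dual $M^\vee$ placed in total degree $n$ and zero in all others. On the $E_2$-page $\Ext^q_S(M,S)=0$ for $q<c$, and $\dim\Ext^q_S(M,S)\le n-q$ forces $E_2^{p,q}=0$ for $p>n-q$; thus the nonzero terms sit in the triangle $c\le q$, $p+q\le n$. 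The column $q=c$ is $E_2^{p,c}=H^p_\fm(K_M)$; it has no outgoing differential (targets land in columns $q<c$, which vanish) and receives differentials only from $H^{p-s}_\fm(\Ext^{c+s-1}_S(M,S))$ with $s\ge 2$.

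For $(a)\Rightarrow(b)$ I would run this bookkeeping under $\dim\Ext^{c+k}_S(M,S)\le d-k-r$. An incoming differential to $E^{p,c}$ originates at $H^{p-s}_\fm(\Ext^{c+s-1}_S(M,S))$, which vanishes once $p-s>d-(s-1)-r$, i.e.\ once $p\ge d-r+2$; together with the absence of outgoing differentials this gives $H^p_\fm(K_M)=E_\infty^{p,c}$ for such $p$, and for $d-r+2\le p<d$ the total degree $p+c<n$ kills $E_\infty^{p,c}$. Hence $H^i_\fm(K_M)=0$ for $d-r+2\le i<d$, the vanishing in (b). For the biduality map, the corner $E_2^{d,c}=H^d_\fm(K_M)$ is Matlis dual to $K_{K_M}$; the dimension bounds make every off-corner term on the anti-diagonal $p+q=n$ vanish (for $q=n-p>c$ one has $\dim\Ext^{n-p}_S(M,S)\le p-r<p$), and for $r\ge 2$ they also kill the differentials into the corner, so $M^\vee\cong E_\infty^{d,c}=H^d_\fm(K_M)$ and dualizing identifies $M\to K_{K_M}$ with an isomorphism. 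For $r=1$ the corner can only acquire incoming differentials, so $M^\vee$ is a quotient of $H^d_\fm(K_M)$ and dualizing gives $M\hookrightarrow K_{K_M}$, i.e.\ injectivity.

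The converse $(b)\Rightarrow(a)$ is where the real work lies, and I expect it to be the main obstacle: one must propagate the single-column vanishing $H^i_\fm(K_M)=0$ (together with the biduality data) \emph{backwards} through the spectral sequence to bound the dimensions of the individual modules $\Ext^{c+k}_S(M,S)$, while the differentials entangle many of these terms at once. The cleanest route I see is induction on $r$, with bases $r=1,2$ being the classical facts that $M\to K_{K_M}$ is injective, respectively bijective, exactly when $M$ satisfies $(S_1)$, respectively $(S_2)$. For the inductive step, the hypotheses of (b) at level $r$ contain those at level $r-1$, so $(S_{r-1})$ holds and yields $\dim\Ext^{c+k}_S(M,S)\le d-k-r+1$; the \emph{one} extra vanishing $H^{d-r+2}_\fm(K_M)=0$ must then be fed into the spectral sequence to shave each of these dimensions down by one to $d-k-r$, upgrading $(S_{r-1})$ to $(S_r)$. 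Making this last shaving precise---forcing the top local cohomology of each $\Ext^{c+k}_S(M,S)$ to die from a single vanishing on the $q=c$ column---is the delicate point, and is exactly where the filtration argument of \cite[Theorem 1.14]{Sch98} does its work.
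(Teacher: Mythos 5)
The paper itself offers no proof of this lemma---it is quoted verbatim from Schenzel, \cite[Theorem 1.14]{Sch98}---so your proposal has to stand on its own. Your direction (a)$\Rightarrow$(b) is essentially sound: the spectral sequence $E_2^{p,q}=H^p_\fm\bigl(\Ext^q_S(M,S)\bigr)\Rightarrow M^\vee$ with abutment concentrated in total degree $n$, the vanishing of the columns $q<c$, and the numerical translation of $(S_r)$ into $\dim\Ext^{c+k}_S(M,S)\le d-k-r$ do yield both the vanishing of $H^i_\fm(K_M)$ for $d-r+2\le i<d$ and the identification $M^\vee\cong E_\infty^{d,c}$, hence bijectivity for $r\ge 2$ and injectivity for $r=1$. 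Two points are glossed over: that the edge-map isomorphism really is the Matlis dual of the \emph{natural} map $M\to K_{K_M}$, and the reduction to the complete case, where ascent of equidimensionality needs Ratliff's theorem (factor rings of Gorenstein rings are universally catenary) and ascent of $(S_r)$ to the completion is not automatic; both are avoidable by not completing at all and using local duality in the form $H^i_\fm(N)\cong\Hom_S(\Ext^{n-i}_S(N,S),E)$, valid over any Gorenstein local $S$.

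The genuine gap is (b)$\Rightarrow$(a). You defer its crux to ``the filtration argument of \cite[Theorem 1.14]{Sch98}'', i.e.\ to the very result being proved, so the proposal is circular exactly where you yourself locate the real work. Worse, the route you sketch cannot close as stated: in the inductive step you must show that the $E_2$-terms $H^{d-k-r+1}_\fm\bigl(\Ext^{c+k}_S(M,S)\bigr)$, sitting on the anti-diagonal $p+q=n-r+1$, actually \emph{vanish}, since by Grothendieck non-vanishing this is precisely what lowers $\dim\Ext^{c+k}_S(M,S)$ from $d-k-r+1$ to $d-k-r$. But the spectral sequence only gives $E_\infty^{p,q}=0$ there. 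Your extra hypothesis $H^{d-r+2}_\fm(K_M)=0$ kills the \emph{outgoing} differentials (their targets in column $q=c$ are subquotients of this module, and targets in intermediate columns die by the inductive dimension bound), so these terms can perfectly well be nonzero at $E_2$ and be destroyed by \emph{incoming} differentials from $H^{p-s}_\fm\bigl(\Ext^{c+k+s-1}_S(M,S)\bigr)$; the inductive bound cannot rule this out, because those sources sit exactly one degree below their permitted dimension. So no amount of bookkeeping at the closed point alone completes the induction. What actually proves (b)$\Rightarrow$(a) is induction on $\dim M$ combined with localization: for $\fp\ne\fm$ the data in (b) localize, because for an equidimensional module over a factor of a Gorenstein ring one has $(K_M)_\fp\cong K_{M_\fp}$ and the analogous degree-shifted isomorphisms $\bigl(\Ext^{n-i}_S(M,S)\bigr)_\fp$ for the deficiency modules, so induction gives $(S_r)$ at every non-maximal prime, and only the depth inequality at $\fm$ itself remains to be extracted from duality. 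This localization idea is the missing ingredient, and it is not a routine completion of your argument but a different mechanism.
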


We will use the following facts on cohomological relations of linked
ideas, later on.

\begin{lem} \label{Schenzel}  Let $I$ and $J$ be two linked ideals of a local Gorenstein ring $(R,\fm)$.
Suppose that $E:=E(R/\fm)$ is the injective hull of $R/\fm$ and
$d=\dim R/I=\dim R/J$.
\begin{enumerate}
   \item[(a)]  There exist a canonical exact sequence
$$0 \rightarrow H^{d-1}_{\fm}(R/J) \rightarrow H^{d}_{\fm}(K_{R/I}) \rightarrow \Hom_R(R/I,E) \rightarrow 0,$$
 and the canonical isomorphisms
$$H^{i-1}_{\fm}(R/J) \cong H^{i}_{\fm}(K_{R/I}),\ i < d,$$
(cf. \cite[Lemma 4.2]{Sch82}).

  \item[(b)]  For an integer $r \geq 2$ the
following statements are equivalent:
\begin{enumerate}
   \item[(1)] $R/I$  satisfies $(S_r)$;
  \item[(2)] $H^{i}_{\fm}(R/J)=0$ for all integers $d-r<i<d$,
  (cf.
\cite[Theorem 4.1]{Sch82}).
\end{enumerate}

\item[(c)]  Assume that the local cohomology
modules $H^{i}_{\fm}(R/I)$ have finite length over $R$ for all
integers $i=0, 1,\ldots, d-1$. Then,
 $$H^{d-i}_{\fm}(R/J)=\Hom_R(H^{i}_{\fm}(R/I),E),\  \text{\ for\ all\ integers\ } i=1,\ldots, d-1,$$
 (cf. \cite[Theorem 1.2, pp. 157]{St-Vo} or \cite[Corollary 3.3]{Sch82}).
\end{enumerate}
\end{lem}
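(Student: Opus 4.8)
The common engine for all three parts is the pair of fundamental linkage sequences. Writing $S=R/(\ux)$, a Gorenstein local ring of dimension $d=\dim R-g$, the very definition of the link $I\sim_{(\ux)}J$ identifies $J/(\ux)\cong K_{R/I}$ and $I/(\ux)\cong K_{R/J}$, so that
\[
0\to K_{R/I}\to S\to R/J\to 0\qquad\text{and}\qquad 0\to K_{R/J}\to S\to R/I\to 0
\]
are both exact. Since Matlis and local duality are the tools I want, I would first replace $R$ by its $\fm$-adic completion; this is harmless, because linkage, Serre's condition $(S_r)$ and the finite-length hypothesis are all stable under completion, and $H^i_\fm(-)$ commutes with it.

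For part (a) I would feed the first sequence into the long exact sequence of $H^\bullet_\fm(-)$, using that $S$ is Cohen--Macaulay of dimension $d$, so $H^i_\fm(S)=0$ for $i\neq d$. For $i<d$ the flanking terms $H^{i-1}_\fm(S)$ and $H^i_\fm(S)$ vanish, so the connecting maps give the asserted isomorphisms $H^{i-1}_\fm(R/J)\cong H^i_\fm(K_{R/I})$. At the top the sequence degenerates to
\[
0\to H^{d-1}_\fm(R/J)\to H^d_\fm(K_{R/I})\xrightarrow{\ \phi\ } H^d_\fm(S)\to H^d_\fm(R/J)\to 0 .
\]
The step I expect to be the crux is identifying $\im\phi=\ker\big(H^d_\fm(S)\to H^d_\fm(R/J)\big)$ with $\Hom_R(R/I,E)$. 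Here I would dualise: by local duality over the Gorenstein ring $R$, the map $H^d_\fm(S)\to H^d_\fm(R/J)$ induced by $S\twoheadrightarrow R/J$ is the Matlis dual of the map $K_{R/J}\to K_S\cong S$ coming from the \emph{second} linkage sequence. Its kernel is therefore, by exactness of Matlis duality, the dual of $\coker\big(K_{R/J}\hookrightarrow S\big)$; and since $K_{R/J}=I/(\ux)$ sits in $S=R/(\ux)$ with quotient $R/I$, this cokernel is exactly $R/I$. Thus $\ker\phi^{\text{(target)}}\cong\Hom_R(R/I,E)$, and since it equals $\im\phi$ the three-term sequence follows.

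For part (b) I would combine (a) with the Serre characterisation of Lemma \ref{Serr charact}, applied to the equidimensional module $R/I$ with canonical module $K_{R/I}$: for $r\ge 2$, $R/I$ satisfies $(S_r)$ exactly when the biduality map $R/I\to K_{K_{R/I}}$ is bijective and $H^i_\fm(K_{R/I})=0$ for $d-r+2\le i<d$. Through the isomorphisms of (a) the vanishing range translates into $H^i_\fm(R/J)=0$ for $d-r+1\le i\le d-2$. For the bijectivity condition I would Matlis-dualise the top three-term sequence of (a): this produces $0\to R/I\to K_{K_{R/I}}\to \Hom_R\big(H^{d-1}_\fm(R/J),E\big)\to 0$, exhibiting the biduality map as injective with cokernel dual to $H^{d-1}_\fm(R/J)$, so that bijectivity is equivalent to $H^{d-1}_\fm(R/J)=0$. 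Assembling the two conditions fills out the full range $d-r<i<d$ of statement (2).

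For part (c) the finite-length hypothesis on $H^i_\fm(R/I)$ for $i<d$ makes $R/I$ a module of finite local cohomology (generalised Cohen--Macaulay), and I would exploit the duality for such modules recorded in the cited \cite{St-Vo} and \cite{Sch82}: all the relevant lower local cohomology modules are of finite length, hence Matlis reflexive, and their duals match up in complementary degrees. Substituting the linkage isomorphisms of (a) to convert $K_{R/I}$-cohomology into $R/J$-cohomology then yields $H^{d-i}_\fm(R/J)\cong\Hom_R\big(H^i_\fm(R/I),E\big)$ for $1\le i\le d-1$. The only delicate point is the degree bookkeeping needed for the two dualities to compose with the correct shift; I would pin this down by checking the extreme case where $R/I$ is Cohen--Macaulay (so both sides must force $R/J$ Cohen--Macaulay, recovering the Peskine--Szpiro link), which fixes the indexing unambiguously.
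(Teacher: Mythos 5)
The paper offers no proof of Lemma \ref{Schenzel} at all: it is quoted from \cite[Lemma 4.2, Theorem 4.1, Corollary 3.3]{Sch82} and \cite[Theorem 1.2]{St-Vo}, so the real comparison is with Schenzel's arguments. Your treatment of (a) and (b) is correct and is in fact Schenzel's own method: the two linkage sequences $0\to K_{R/I}\to S\to R/J\to 0$ and $0\to K_{R/J}\to S\to R/I\to 0$ with $S=R/(\ux)$ Gorenstein of dimension $d$, the long exact sequence of $H^{\bullet}_{\fm}$ using $H^i_{\fm}(S)=0$ for $i\neq d$, and Matlis/local duality after completing. Two functoriality points are left implicit, and they are exactly where the content sits: (i) in (a), that the Matlis dual of $H^d_{\fm}(S)\to H^d_{\fm}(R/J)$, namely $\Ext^g_R(R/J,R)\to\Ext^g_R(S,R)$, really is the inclusion $I/(\ux)\hookrightarrow S$ of the second linkage sequence --- this needs the functorial change-of-rings isomorphism $\Ext^g_R(M,R)\cong\Hom_S(M,S)$ for $S$-modules $M$ together with $K_S\cong S$; and (ii) in (b), that the injection $R/I\to K_{K_{R/I}}$ obtained by dualizing your three-term sequence is the \emph{canonical} biduality map appearing in Lemma \ref{Serr charact}. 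Both are true and standard, but a complete write-up must verify them, since the whole argument is a chain of such identifications.

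Part (c), by contrast, has a genuine hole. The duality you invoke vaguely is Schenzel's theorem on the canonical module of a generalized Cohen-Macaulay module: $H^i_{\fm}(K_M)\cong\Hom_R(H^{d+1-i}_{\fm}(M),E)$ for $2\le i\le d-1$, together with $H^0_{\fm}(K_M)=H^1_{\fm}(K_M)=0$ and a four-term exact sequence $0\to\Hom_R(H^1_{\fm}(M),E)\to H^d_{\fm}(K_M)\to\Hom_R(M,E)\to\Hom_R(H^0_{\fm}(M),E)\to 0$. Substituting this into the isomorphisms of (a) proves (c) only in the range $2\le i\le d-1$, because (a) gives $H^{j-1}_{\fm}(R/J)\cong H^j_{\fm}(K_{R/I})$ only for $j<d$. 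The boundary case $i=1$ of the statement must be extracted separately, by comparing the three-term sequence of (a) with the four-term sequence above (using $H^0_{\fm}(R/I)=0$, which holds since $I$ is unmixed) and again identifying the two maps onto $\Hom_R(R/I,E)$; this yields $H^{d-1}_{\fm}(R/J)\cong\Hom_R(H^1_{\fm}(R/I),E)$. Your device of ``pinning down the indexing by checking the Cohen-Macaulay extreme case'' is only a consistency check and cannot produce this case: in the Cohen-Macaulay situation both sides vanish for all $i<d$, so it detects nothing. Since the paper itself merely cites \cite[Corollary 3.3]{Sch82} and \cite[Theorem 1.2]{St-Vo} for (c), leaning on those sources is legitimate --- but then you should say so explicitly rather than present a sketch whose boundary case is unproved.
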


\begin{remark} \label{ass}  Let  $I, J, \fc$ be as before. Then $\fm \nsubseteq \Ass (R/I)$ if and only if $\fm \nsubseteq \Ass
(R/J)$. To see this, without loss of generality assume that $R/I$ is
not Cohen-Macaulay. Suppose that $\fm \nsubseteq \Ass (R/I)$. From
the above descriptions, one can obtain the following exact sequence
\begin{equation}\label{exact}
   0\longrightarrow K_{R/J}\longrightarrow R/\fc\longrightarrow
R/I \longrightarrow0.
\end{equation}
As $\zd(R/I)=\bigcup_{\fp \in \Ass_R(R/I)} \fp$, we observe that
$\fm \nsubseteq \zd(R/I)$ ($\fm$ is not contained in the zero
divisors of $R/I$), that is $H^{0}_{\fm}(R/I)=0$. From the exact
sequence (\ref{exact}), one has $\depth K_{R/J}-1=\depth R/I$ (for
detailed proof see \cite[Proposition 3.3]{egh-sh}). It implies that
$\depth K_{R/J}
>1$. once again, in the light of (\ref{exact}) we have
$H^{0}_{\fm}(R/\fc)=0$. Hence, we conclude that $\fm \nsubseteq \Ass
(R/J)$. To this end note that $ \Ass (R/I) \cup \Ass (R/J)= \Ass
(R/\fc)$.
\end{remark}

Of particular interest is that if $I$ and $J$ are in the same
linkage class then what properties of $I$ are shared by $J$. For
instance, in \cite[Proposition 1.3]{P-S74} Peskine and Szpiro proved
that if $I$ and $J$ are linked ideals and $I$ is a Cohen-Macaulay
ideal then so is $J$. The same result was proved by Schenzel in
\cite[Corollary 3.3]{Sch82} for Buchsbaum ideals. In this direction,
using the results of Schenzel (Lemma \ref{Schenzel}), with some mild
assumptions, we show that between evenly linked ideals the Serre's
condition $(S_r)$ can be shared.

\begin{prop} \label{even sr} Let $(R,\fm)$ be a  Gorenstein local ring and $r \geq 2$ be an integer. Suppose
that either
\begin{enumerate}
   \item[(a)] $I, J$ are two evenly linked ideals of $R$, or
  \item[(b)] $I, J$ are two linked ideals of $R$ and $\fm \nsubseteq
\Ass(R/I)$.
\end{enumerate}
Then, $R/I$ satisfies $(S_r)$ if and only if $R/J$ is so.
\end{prop}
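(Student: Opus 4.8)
The plan is to reduce the entire statement, via Schenzel's criterion (Lemma \ref{Schenzel}(b)), to a comparison of the vanishing of local cohomology of $R/I$ and $R/J$ in one fixed range of degrees. Put $d=\dim R/I=\dim R/J$. Applying Lemma \ref{Schenzel}(b) to the ordered pair $(I,J)$, and, since linkage is symmetric, also to $(J,I)$, one gets the two equivalences
\[
R/I \text{ satisfies } (S_r) \iff H^i_{\fm}(R/J)=0 \ \text{ for all } d-r<i<d,
\]
\[
R/J \text{ satisfies } (S_r) \iff H^i_{\fm}(R/I)=0 \ \text{ for all } d-r<i<d.
\]
Hence $R/I$ satisfies $(S_r)$ exactly when $R/J$ does precisely when these two top--range vanishing conditions are equivalent; that is the single point to establish in each of the two cases.

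For part (a) this equivalence is automatic through a chain of \emph{double} links and needs no further hypothesis. It suffices to treat a double link $I\sim L\sim J$, since an even linkage $J=I_0\sim I_1\sim\cdots\sim I_{2m}=I$ is a composition of $m$ such steps (consecutive links preserve the pure height, so all the $I_k$ have the same dimension $d$). For the link $I\sim L$, Lemma \ref{Schenzel}(b) gives that $R/I$ satisfies $(S_r)$ iff $H^i_{\fm}(R/L)=0$ for $d-r<i<d$; for the link $L\sim J$, the symmetric form gives that $R/J$ satisfies $(S_r)$ iff $H^i_{\fm}(R/L)=0$ for $d-r<i<d$. Chaining these two equivalences through the common condition on $R/L$ yields that $R/I$ satisfies $(S_r)$ iff $R/J$ does, and iterating along the even chain finishes part (a).

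For part (b) only a single (possibly odd) link is at hand, so there is no middle term $R/L$ to mediate, and the hypothesis $\fm\nsubseteq\Ass(R/I)$ must do the work. First, Remark \ref{ass} upgrades this to $\fm\nsubseteq\Ass(R/J)$, so that $H^0_{\fm}(R/I)=H^0_{\fm}(R/J)=0$ and both quotients have positive depth. Writing $(-)^{\vee}=\Hom_R(-,E(R/\fm))$ and completing if necessary, I would combine the linkage short exact sequences $0\to K_{R/J}\to R/\fc\to R/I\to 0$ and $0\to K_{R/I}\to R/\fc\to R/J\to 0$ (with $R/\fc$ Cohen--Macaulay of dimension $d$) with the canonical isomorphisms $H^{i}_{\fm}(K_{R/I})\cong H^{i-1}_{\fm}(R/J)$ and $H^{i}_{\fm}(K_{R/J})\cong H^{i-1}_{\fm}(R/I)$ for $i<d$ of Lemma \ref{Schenzel}(a). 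Matlis--dualizing the top--degree part of Lemma \ref{Schenzel}(a) produces the exact sequence $0\to R/I\to K_{K_{R/I}}\to H^{d-1}_{\fm}(R/J)^{\vee}\to 0$ together with its $I\leftrightarrow J$ partner; here $\fm\nsubseteq\Ass$ forces $H^0_{\fm}(R/I)=0$, so the natural biduality map $R/I\to K_{K_{R/I}}$ is genuinely injective and its cokernel $H^{d-1}_{\fm}(R/J)^{\vee}$ measures exactly the failure of $(S_2)$, which is the boundary degree $i=d-1$ of the range. This is where the hypothesis is consumed.

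The main obstacle is that, unlike in the double--link case, there is \emph{no} degree--by--degree isomorphism between $H^i_{\fm}(R/I)$ and $H^i_{\fm}(R/J)$: the clean identifications of Lemma \ref{Schenzel}(a) only relate $R/I$ to the canonical module $K_{R/J}$ (and $R/J$ to $K_{R/I}$), and passing back from $K_{R/J}$ to $R/J$ mixes several cohomological degrees, so the top--range vanishing does not transfer formally. I expect to bridge this by invoking Serre's criterion in the form of Lemma \ref{Serr charact}: since $(S_r)$ with $r\ge 2$ already entails $(S_2)$, the reflexivity identifications $R/I\cong K_{K_{R/I}}$ and $R/J\cong K_{K_{R/J}}$ become available and, by the two exact sequences above, are governed precisely by the boundary degree $d-1$ that $\fm\nsubseteq\Ass$ controls. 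Making this interplay of the two canonical modules precise — equivalently, showing the two top--range vanishing conditions coincide — is the delicate step; I would carry it out by localizing the $(S_r)$ condition and inducting on $d$, since both linkage and Serre's condition are compatible with localization.
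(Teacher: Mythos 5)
Your reduction and your case (a) are correct, and (a) follows essentially the paper's own route: both arguments mediate through the intermediate ideal of the double link. The difference is only packaging --- the paper unrolls the criterion, applying Lemma \ref{Serr charact} together with the isomorphisms and exact sequence of Lemma \ref{Schenzel}(a) to each of the two links, whereas you invoke Lemma \ref{Schenzel}(b) twice as a black box and chain the two equivalences through the common condition $H^i_{\fm}(R/L)=0$ for $d-r<i<d$. That is a legitimate streamlining, and your explicit induction along an even chain is fine, since linked ideals have the same pure height and hence the same dimension.

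Case (b), however, contains a genuine gap, and you say so yourself: the ``delicate step'' you defer is not a detail but the entire content of the statement. After your (correct) reduction, what must be shown is that for a single link $I\sim J$ with $H^0_{\fm}(R/I)=H^0_{\fm}(R/J)=0$, the vanishing of $H^i_{\fm}(R/J)$ for $d-r<i<d$ is equivalent to the vanishing of $H^i_{\fm}(R/I)$ in the same range. None of the ingredients you assemble can produce this exchange: Lemma \ref{Schenzel}(a), the linkage exact sequences, and their Matlis duals relate $R/I$ to $K_{R/J}$ and $R/J$ to $K_{R/I}$, but never $H^{\bullet}_{\fm}(R/I)$ to $H^{\bullet}_{\fm}(R/J)$ degree by degree --- which is precisely the obstacle you yourself point out. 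The missing idea is Lemma \ref{Schenzel}(c), which you never invoke: the linkage duality $H^{d-i}_{\fm}(R/J)\cong\Hom_R(H^{i}_{\fm}(R/I),E)$ for $i=1,\dots,d-1$, interchanging low-degree cohomology of one quotient with high-degree cohomology of the other. The paper's proof of (b) consists exactly of Remark \ref{ass} (which you do use) combined with Lemma \ref{Schenzel}(b) and (c). Without such a duality there is a concrete numerical obstruction: at the closed point, $(S_r)$ hands you vanishing of $H^i_{\fm}(R/I)$ only for $i<\min\{r,d\}$, while the target range is $d-r<i<d$; these two ranges are disjoint whenever $d\ge 2r-1$, so no formal manipulation of your exact sequences can reach the degrees you need.

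Your fallback plan --- localize the $(S_r)$ condition and induct on $d$ --- cannot close this gap either. Both conditions being compared are statements about local cohomology supported at $\fm$; localization at primes $\fp\neq\fm$ (where linkage and $(S_r)$ do behave well) yields no information at the closed point, so after any such induction the identical question reappears at $\fm$, untouched. Some duality over $\fm$ of the type in Lemma \ref{Schenzel}(c) is unavoidable; and if you do invoke it, note that (c) carries a finite-length hypothesis on the modules $H^i_{\fm}(R/I)$, $i<d$, which must then also be addressed (the paper's own two-line proof of (b) is terse on exactly this point).
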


\begin{proof} (a) Suppose that $\fa$ is an ideal of $R$ where $I \sim \fa \sim J$. By virtue of Lemma \ref{Serr
charact},
$R/I$ satisfies $(S_r)$ if and only if
\begin{equation}\label{sr1}
   H^d_{\fm}(K_{R/I}) \rightarrow \Hom_R(R/I,E(R/\fm)),\ d= \dim
   R/I,
\end{equation}
 is bijective  and $H^i_{\fm}(K_{R/I})=0,\ d-r+1 < i < d$.
As $I$ is linked to $\fa$, by Lemma \ref{Schenzel}(a)
 $$0=H^{i-1}_{\fm}(R/\fa) \cong H^{i}_{\fm}(K_{R/I}),\ d-r+1 < i < d,$$
 and as $\fa$ is linked to $J$,
 $$H^{i-1}_{\fm}(R/\fa) \cong H^{i}_{\fm}(K_{R/J}),\ d-r+1 < i < d.$$
 From the exact sequence (\ref{sr1}) and Lemma \ref{Schenzel}(a) one has $H^{d-1}_{\fm}(R/\fa)=0$. Thus, by the following exact sequence
 $$0 \rightarrow H^{d-1}_{\fm}(R/\fa) \rightarrow H^{d}_{\fm}(K_{R/J}) \rightarrow \Hom_R(R/J,E(R/\fm)) \rightarrow 0,$$
 one has the homomorphism
 $H^d_{\fm}(K_{R/J}) \rightarrow \Hom_R(R/J,E(R/\fm))$ is bijective. Once again using Lemma \ref{Serr charact}
 it turns out that $R/J$ satisfies $(S_r)$.

 (b) First of all note that as $\fm \nsubseteq
\Ass(R/I)$, by Remark \ref{ass} one has $\fm \nsubseteq \Ass(R/J)$,
that is $H^0_{\fm}(R/I)=0=H^0_{\fm}(R/J)$. Then, we are done by
virtue of Lemma \ref{Schenzel}(b),(c).
\end{proof}

Of particular interest is examining cohomological dimension of two
linked ideals.

\begin{prop} \label{evevanish} Let $(R,\fm)$ be a  Gorenstein local ring and $I, J$ two evenly linked ideals of $R$.
 Suppose that
 \begin{enumerate}
   \item[(a)] the triples $(R,I, \varphi)$ and $(R,J, \varphi)$ have (Flat) property;
\item[(b)] the triple $(R,I, \varphi)$ has (Inj)
 property;
  \item[(c)] $\varphi$ is of finite length.
\end{enumerate}
Then for a given integer $i$, $H^{i}_{I}(R)=0$ if and
only if $H^{i}_{J}(R)=0.$
\end{prop}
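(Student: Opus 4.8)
The plan is to run the vanishing criterion of Theorem \ref{Main vanishing} on the ideal $I$ and then transport the resulting statement about $H^\bullet_{\fm}(R/I)$ to $R/J$ by linkage. After passing to the completion (which preserves Gorensteinness, the (Flat) and (Inj) properties, and the linkage data, since completion is faithfully flat and $\Hom$ commutes with flat base change), set $n=\dim R$, $g=\Ht I=\Ht J$ and $d=\dim R/I=\dim R/J=n-g$. Because $(R,I,\varphi)$ has both (Flat) and (Inj), Theorem \ref{Main vanishing} gives the full equivalence $H^i_I(R)=0 \iff H^{n-i}_{\fm}(R/I)=0$. Thus the whole statement reduces to comparing the vanishing of $H^{n-i}_{\fm}(R/I)$ with that of $H^i_J(R)$, and the bridge will be a degreewise comparison of $H^\bullet_{\fm}(R/I)$ with $H^\bullet_{\fm}(R/J)$.

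For that bridge I first record that evenly linked ideals share their intermediate deficiency modules. Writing the even link as $I\sim \fa \sim J$ and applying the canonical isomorphisms $H^{i-1}_{\fm}(R/I)\cong H^{i}_{\fm}(K_{R/\fa})\cong H^{i-1}_{\fm}(R/J)$ of Lemma \ref{Schenzel}(a) to the two links $\fa\sim I$ and $\fa\sim J$, one obtains $H^j_{\fm}(R/I)\cong H^j_{\fm}(R/J)$ for all $j\le d-2$. The boundary degree $j=d-1$ is handled separately: by Lemma \ref{Schenzel}(b) with $r=2$, the vanishing of $H^{d-1}_{\fm}(R/J)$ (resp.\ of $H^{d-1}_{\fm}(R/I)$) is equivalent to $R/I$ (resp.\ $R/J$) satisfying $(S_2)$, and these two Serre conditions are equivalent by Proposition \ref{even sr}(a). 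Finally $j=d$ is trivial, both top modules being nonzero. Hence $H^j_{\fm}(R/I)=0 \iff H^j_{\fm}(R/J)=0$ for every $j$.

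One direction of the Proposition is then immediate: $H^i_I(R)=0 \Rightarrow H^{n-i}_{\fm}(R/I)=0 \Rightarrow H^{n-i}_{\fm}(R/J)=0 \Rightarrow H^i_J(R)=0$, where the last step is Proposition \ref{2} applied to $(R,J,\varphi)$, which uses only (Flat). The reverse implication $H^i_J(R)=0\Rightarrow H^i_I(R)=0$ is where the real work lies: to run it I would need $H^i_J(R)=0\Rightarrow H^{n-i}_{\fm}(R/J)=0$, i.e.\ the conclusion of Proposition \ref{1} for $J$, which requires an (Inj)-type surjectivity of the transition maps for $J$ that is not among the hypotheses.

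I expect this reverse direction to be the main obstacle, and the way I would resolve it is to promote the linkage to the whole tower $\{\varphi^t(-)R\}_t$. The key point is that a flat endomorphism preserves linkage: since $\Phi$ is exact and commutes with $\Hom$ and $\Ext$ (so $\Phi(R/\fa)\cong R/\varphi(\fa)R$ and $\Phi(K_{R/\fa})\cong K_{R/\varphi(\fa)R}$, while $\varphi(\underline{x})$ is again a regular sequence), one gets $\varphi^t(I)R\sim \varphi^t(J)R$ evenly, via $\varphi^t(\fa)R$, for every $t$. The canonical isomorphisms of Lemma \ref{Schenzel}(a) then assemble degree by degree into isomorphisms of inverse systems $\{H^j_{\fm}(R/\varphi^t(I)R)\}_t\cong\{H^j_{\fm}(R/\varphi^t(J)R)\}_t$ for $j\le d-2$, so that after applying $\vpl_t$ and local duality one gets $H^i_I(R)\cong H^i_J(R)$ for $i\ge g+2$ with no appeal to (Inj) for $J$ at all. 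The delicate case is again $j=d-1$ (that is, $i=g+1$): there Lemma \ref{Schenzel}(a) yields only an exact sequence carrying the correction term $\Hom_R(R/\varphi^t(I)R,E)$ instead of an isomorphism, and one must check that the surjectivity of the transition maps for $I$ (guaranteed by its (Inj) property) forces the corresponding surjectivity for $J$ across this sequence. Verifying that these ``canonical'' linkage isomorphisms are genuinely $\varphi$-equivariant, and controlling this single boundary degree, is the step I would budget the most care for.
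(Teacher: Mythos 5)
Your forward direction is exactly the paper's argument: chain the canonical isomorphisms of Lemma \ref{Schenzel}(a) through the middle ideal $\fa$ to get $H^j_{\fm}(R/I)\cong H^j_{\fm}(R/J)$ for $j<\dim R/I$, then apply Proposition \ref{1} to $I$ and Proposition \ref{2} to $J$. More importantly, your diagnosis of the reverse direction is correct, and it points at a real defect in the paper's own proof: the paper concludes with ``the claim follows from Propositions \ref{1} and \ref{2},'' but the implication $H^i_J(R)=0\Rightarrow H^{\dim R-i}_{\fm}(R/J)=0$ is precisely Proposition \ref{1} applied to $J$, whose hypotheses include the (Inj) property for the triple $(R,J,\varphi)$ --- which is not among the stated hypotheses of the Proposition (only (Flat) is assumed for both triples, (Inj) only for $I$). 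So, as written, the paper's argument establishes only $H^i_I(R)=0\Rightarrow H^i_J(R)=0$; the intended hypothesis was presumably (Inj) for both triples, and with that amendment both the paper's proof and yours close symmetrically with no extra work.

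Two caveats about your attempted repair of the asymmetric statement. First, the tower-linkage idea (showing $\varphi^t(I)R\sim\varphi^t(\fa)R\sim\varphi^t(J)R$ for every $t$ and comparing the two inverse systems) is a genuinely different strategy which, if completed, would prove the statement without any (Inj) hypothesis on $J$ --- but you leave unproved precisely the decisive point, namely that Schenzel's isomorphisms are compatible with the transition maps of the towers. Note that the linking complete intersection itself varies with $t$ (it is $\varphi^t(\ux)R$), so these isomorphisms are ``canonical'' only relative to a moving complete intersection, and their naturality in $t$ is not automatic; the boundary degree $d-1$ adds a further correction term, as you observe. As it stands this is a program, not a proof. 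Second, a small slip in your degreewise comparison: you invoke Lemma \ref{Schenzel}(b) for the pair $(I,J)$ as if they were directly linked, but they are only evenly linked. The correct route for $j=d-1$ is through $\fa$: applying that criterion to the two direct links, each of $H^{d-1}_{\fm}(R/I)$ and $H^{d-1}_{\fm}(R/J)$ vanishes if and only if $R/\fa$ satisfies $(S_2)$; alternatively (which is what the paper implicitly uses), both modules occur as the kernel of the same canonical surjection $H^{d}_{\fm}(K_{R/\fa})\rightarrow\Hom_R(R/\fa,E)$ coming from the two exact sequences of Lemma \ref{Schenzel}(a), which gives an honest isomorphism in that degree rather than just an equivalence of vanishing.
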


\begin{proof}  Note that two linked ideals have the same height so for $i \leq \Ht (I)$ we have nothing to prove.
 Hence, we may assume that $i > \Ht (I)$.

Suppose that $\fa$ is an ideal of $R$ where $I \sim \fa \sim J$. By
what we have seen in the proof of Proposition \ref{even sr}, in the
light of Lemma \ref{Schenzel}(a) one has $H^{j}_{\fm}(R/I)\cong H^{j}_{\fm}(R/J)$
for all $j < \dim R/I$. Hence, the claim follows
from Theorem \ref{vanishing result of local cohomology}.
\end{proof}

\begin{remark} \label{skew lines} It is noteworthy to mention that the Proposition \ref{evevanish} is
no longer true for non-evenly linked ideals. To see this, let
$R=k[x_0,x_1,x_2,x_3]$ be a polynomial ring over an algebraically
closed field $k$. Let $I=(x_0,x_1)\cap(x_2,x_3)$ be the defining
ideal of the union of the two skew lines in ${\mathbb P}^3$ and
$J=(x_0x_3-x_1x_2,x^3_1-x^2_0x_2,x_0x^2_2-x^2_1x_3,x^3_2-x_1x^2_3)$
be the defining ideal of the  twisted quartic curve in ${\mathbb
P}^3$. Then it is not hard to show that
$$I \cap J=(x_0x_3-x_1x_2,x_0x_2^2-x_1^2x_3)$$
is a complete intersection. Therefore $I$ is linked to $J$ by
$\fc:=I \cap J$, where $H^{3}_{I}(R)\neq 0$
 and $H^{3}_{J}(R)= 0$.
\end{remark}

Next, we consider the property of being set-theoretically
Cohen-Macaulay between linked ideals.

\begin{thrm} \label{link1} Let $(R,\fm)$ be a regular local ring and let $I, J$ be two linked ideals of $R$.
 Suppose that
\begin{enumerate}[(a)]
   \item $(R,I, \varphi)$  satisfies (Flat) property,
  \item $(R,I, \varphi)$  satisfies (Inj) property,
  \item  $\fm \nsubseteq \zd(R/I)$.
\end{enumerate}
If there exists a Cohen--Macaulay ideal $\fb$ with $\sqrt{b}=\sqrt{I}$ such that $\{\varphi^t (\fb)R\}$ is cofinal with respect to $\{\fb^t\},$ then $J$ is
Cohen-Macaulay.
\end{thrm}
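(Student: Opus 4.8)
The plan is to reduce the statement to the classical linkage theorem of Peskine--Szpiro by first showing that the hypotheses force $R/I$ itself to be Cohen--Macaulay, and then transferring Cohen--Macaulayness across the link by means of Lemma \ref{Schenzel}. The whole argument rests on converting the set-theoretic hypothesis on $I$ into the genuine cohomological vanishing $H^j_\fm(R/I)=0$ for all $j\neq\dim R/I$.

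First I would establish that $R/I$ is Cohen--Macaulay. Since $(R,I,\varphi)$ has the (Flat) property and $I$ is set-theoretically Cohen--Macaulay, Theorem \ref{setCM-setCI} gives $\Ht(I)=\cd(R,I)$. Because $R$ is regular, $\grade(I)=\Ht(I)$, so $H^i_I(R)=0$ for $i<\Ht(I)$; together with $\cd(R,I)=\Ht(I)$ this yields $H^i_I(R)=0$ for every $i\neq\Ht(I)$. Next I would check the hypotheses of Theorem \ref{Main vanishing}: the triple has (Flat) and (Inj) by assumption, and since $\varphi$ is a local flat endomorphism of the regular local ring $R$, the dimension formula used in the proof of Proposition \ref{equivalence} shows $\varphi(\fm)R$ is $\fm$-primary, i.e. $\rad(\varphi(\fm)R)=\rad(\fm)$. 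Applying Theorem \ref{Main vanishing} to the vanishing above gives $H^{\dim R-i}_\fm(R/I)=0$ for all $i\neq\Ht(I)$, that is $H^j_\fm(R/I)=0$ for all $j\neq\dim R-\Ht(I)=\dim R/I=:d$. Hence $R/I$ is Cohen--Macaulay.

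With $R/I$ Cohen--Macaulay, the canonical module $K_{R/I}=\Ext^{\Ht(I)}_R(R/I,R)$ is a maximal Cohen--Macaulay $R/I$-module, so $H^i_\fm(K_{R/I})=0$ for all $i<d$. The canonical isomorphisms $H^{i-1}_\fm(R/J)\cong H^i_\fm(K_{R/I})$ for $i<d$ of Lemma \ref{Schenzel}(a) then force $H^j_\fm(R/J)=0$ for all $j<d-1$. For the top slot $j=d-1$ I would use the exact sequence $0\to H^{d-1}_\fm(R/J)\to H^d_\fm(K_{R/I})\to\Hom_R(R/I,E)\to 0$ of Lemma \ref{Schenzel}(a): local duality identifies the surjection $H^d_\fm(K_{R/I})\to\Hom_R(R/I,E)$ with the natural isomorphism coming from $K_{K_{R/I}}\cong R/I$ (valid since $R/I$ is Cohen--Macaulay), forcing $H^{d-1}_\fm(R/J)=0$. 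Here hypothesis (c), $\fm\nsubseteq\zd(R/I)$, guarantees $\depth R/I\geq 1$, equivalently $\fm\notin\Ass(R/I)$, which by Remark \ref{ass} passes to $R/J$ and keeps $d\geq 1$ so that the argument is non-vacuous. Thus $H^j_\fm(R/J)=0$ for every $j\neq d$ and $R/J$ is Cohen--Macaulay; equivalently, once $R/I$ is known to be Cohen--Macaulay one may simply invoke the Peskine--Szpiro linkage theorem \cite{P-S74}.

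The main obstacle is the reduction in the middle step: one must be sure that passing from the purely set-theoretic data (an ideal with the same radical as $I$ and Cohen--Macaulay quotient) to the honest vanishing $H^j_\fm(R/I)=0$ for $j\neq d$ is legitimate, and this is exactly where the (Inj) property and the verification $\rad(\varphi(\fm)R)=\rad(\fm)$ are indispensable. Without (Inj) one only obtains the cohomologically-complete-intersection equality $\Ht(I)=\cd(R,I)$ and cannot upgrade it to Cohen--Macaulayness of $R/I$. The linkage step afterwards is then routine given Lemma \ref{Schenzel}.
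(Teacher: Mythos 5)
Your proof is correct, and its first half coincides in substance with the paper's own argument: the paper runs Lemma \ref{cofinal} and Proposition \ref{2} on the auxiliary Cohen--Macaulay ideal $\fb$ with $\rad(\fb)=\rad(I)$, then Proposition \ref{1}, to obtain $H^i_{\fm}(R/I)=0$ for all $i<d$; you reach the same intermediate conclusion (that $R/I$ itself is Cohen--Macaulay) by invoking Theorem \ref{setCM-setCI} (whose proof is exactly that computation) together with the grade--height argument and Theorem \ref{Main vanishing}, and you additionally verify $\rad(\varphi(\fm)R)=\rad(\fm)$ via the flat local dimension formula, a hypothesis which the paper's proof of this theorem uses silently. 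Where you genuinely diverge is the linkage step. The paper transfers the vanishing to $R/J$ through the duality of Lemma \ref{Schenzel}(c), which only yields $H^j_{\fm}(R/J)=0$ for $j=1,\ldots,d-1$, and must then invoke hypothesis (c) together with Remark \ref{ass} to kill $H^0_{\fm}(R/J)$. You instead exploit that $R/I$ is now known to be Cohen--Macaulay, so that $K_{R/I}$ is maximal Cohen--Macaulay, and finish with Lemma \ref{Schenzel}(a) --- or, more simply, with the classical Peskine--Szpiro linkage theorem \cite{P-S74}. This buys something concrete: hypothesis (c) becomes redundant in your argument (it is in fact redundant in the paper's route too, since $H^0_{\fm}(R/I)=0$ already forces $\fm\notin\Ass(R/I)$ when $d\geq 1$, and (c) automatically excludes $d=0$). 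One caution about your Lemma \ref{Schenzel}(a) ending: to get $H^{d-1}_{\fm}(R/J)=0$ you need the surjection $H^d_{\fm}(K_{R/I})\to\Hom_R(R/I,E)$ to be injective, and an abstract isomorphism of source and target is not enough, since a surjective endomorphism of an Artinian module need not be injective; what saves the argument is exactly what you invoke, namely that this map is the Matlis dual of the natural map $R/I\to K_{K_{R/I}}$, an identification the paper itself relies on in the proof of Proposition \ref{even sr}. The Peskine--Szpiro ending avoids this subtlety altogether.
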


\begin{proof}
Note that,  $d:=\dim R/\varphi(I)R=\dim R/I=\dim R/J,$ where
 the third equality follows by the fact that two linked ideals have the same
  dimension. We are going to show that $H^{i}_{\fm}(R/J)=0$ for all $i < d$.

Let $\fb$ be the ideal described in the assumptions. Then $H^i_{\fm}(R/\fb)=0$ for all $i < d.$ It then follows from Theorem \ref{vanishing result of local cohomology} that $H^{\dim R-i}_{I}(R) \cong H^{\dim R-i}_{\fb}(R)=0$ for all $\dim R-i > \dim R- d$.
Now, using Theorem \ref{vanishing result of local cohomology} again shows that $H^{i}_{\fm}(R/I)=0$
 for all $i <d$. Hence, Lemma \ref{Schenzel}(c) ensures that $H^{i}_{\fm}(R/J)=0$ for $i=1, \ldots, d-1$.
 In this way, combining the previous information jointly with Remark \ref{ass} it follows that
 $H^0_{\fm}(R/J)=0$, this completes the proof.
\end{proof}

\begin{cor} \label{setlink} Let $(R,\fm)$ be a regular local ring and let $I, J$ be two linked ideals of
$R$. Suppose that
\begin{enumerate}[(a)]
\item $(R,I, \varphi)$ and $(R,J, \varphi)$ satisfy (Flat) property,

\item $\fm \nsubseteq \zd(R/I)$,

\item the induced ring endomorphisms $R/\sqrt{I} \rightarrow R/\sqrt{I}$ and $ R/\sqrt{J} \rightarrow R/\sqrt{J}$ are pure and flat.
\end{enumerate}
  Then,  if $I$ is set-theoretically Cohen-Macaulay then so is $J$.
\end{cor}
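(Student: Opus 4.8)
The plan is to reduce the corollary to Theorem \ref{link1}, whose conclusion is in fact the stronger assertion that $J$ is Cohen-Macaulay. The only discrepancy between the hypotheses of Theorem \ref{link1} and those of the present corollary is that Theorem \ref{link1} demands the (Inj) property for the triple $(R,I,\varphi)$, whereas here we are instead handed the purity of the induced endomorphism $R/I \rightarrow R/I$. My first step would therefore be to recover the (Inj) property from purity. Since $R$ is a regular local ring and, by hypothesis (a), the triple $(R,I,\varphi)$ satisfies the (Flat) property, Remark \ref{pure2} applies verbatim: purity of $\bar\varphi:R/I \rightarrow R/I$ (part of hypothesis (c)) forces $(R,I,\varphi)$ to have the (Inj) property.

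With this in hand, every hypothesis of Theorem \ref{link1} is met: $(R,I,\varphi)$ has the (Flat) property by (a), it has the (Inj) property by the previous step, and $\fm \nsubseteq \zd(R/I)$ by (b). The ideals $I$ and $J$ are linked and $I$ is assumed set-theoretically Cohen-Macaulay, so Theorem \ref{link1} yields directly that $R/J$ is Cohen-Macaulay.

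It then remains only to observe that a Cohen-Macaulay ideal is a fortiori set-theoretically Cohen-Macaulay: taking the ideal $J$ itself as witness, one has $\rad(J)=\rad(J)$ while $R/J$ is Cohen-Macaulay, which is precisely the defining condition. Hence $J$ is set-theoretically Cohen-Macaulay, as claimed.

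A remark on the hypotheses seems worthwhile: along this route the (Flat) property for $(R,J,\varphi)$ and the purity of $R/J \rightarrow R/J$ (the parts of (a) and (c) referring to $J$) are not actually invoked, because the conclusion of Theorem \ref{link1} already delivers genuine Cohen-Macaulayness of $J$ rather than merely its set-theoretic counterpart. The sole point requiring care is the passage from purity to the (Inj) property via Remark \ref{pure2}, where the regularity of $R$ and the (Flat) property of $(R,I,\varphi)$ are both essential; once that link is established, Theorem \ref{link1} and the trivial implication ``Cohen-Macaulay $\Rightarrow$ set-theoretically Cohen-Macaulay'' close the argument with no further obstacle.
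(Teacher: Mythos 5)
Your proof is correct, and its second half is genuinely different from (and shorter than) the paper's. You agree with the paper on the main step: both proofs recover the (Inj) property for $(R,I,\varphi)$ from hypothesis (c) via Remark \ref{pure2} (you make this explicit; the paper's proof leaves it implicit) and then apply Theorem \ref{link1} to conclude that $R/J$ is Cohen-Macaulay. Where you diverge is in passing from Cohen-Macaulayness of $J$ to set-theoretic Cohen-Macaulayness: the paper takes a detour through cohomologically complete intersections, first invoking Theorem \ref{CM cond} to obtain $\Ht(J)=\cd(R,J)$ and then the reverse direction of Theorem \ref{setCM-setCI} (which is where the hypotheses on the $J$-side --- the (Flat) property of $(R,J,\varphi)$ and purity of $R/J\rightarrow R/J$ --- are consumed), whereas you simply observe that with the paper's definition the implication is immediate: the ideal $J$ itself witnesses $\rad(J)=\rad(J)$ with $R/J$ Cohen-Macaulay. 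Your shortcut is valid as the definition stands --- indeed the paper itself lists determinantal (hence Cohen-Macaulay) ideals as evidently set-theoretically Cohen-Macaulay --- and your closing remark is a genuine observation: along your route the hypotheses referring to $J$ in (a) and (c) are never used, so the corollary holds under strictly weaker assumptions. What the paper's longer route buys is the intermediate equality $\Ht(J)=\cd(R,J)$, which is of independent interest and would be the right scaffolding if one strengthened the definition of set-theoretic Cohen-Macaulayness (e.g., demanding a radical witness), but as a proof of the stated corollary it is redundant.
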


\begin{proof}  Theorem \ref{link1} ensures that $J$ is a Cohen-Macaulay ideal.
Then, the equality $\Ht(J)= \cd(R,J)$ holds by Theorem \ref{CM cond}. Now, exploiting
 Theorem \ref{setCM-setCI} and the proof of Corollary \ref{equi.setCM-setCI}, one observe that $J$ is a set-theoretically Cohen-Macaulay ideal.
\end{proof}
Notice that one can not remove the purity assumption from the above
result, as the following example shows.

\begin{remark}  Let $R,\ I$ and $J$ be as in Remark \ref{skew lines}.
Since $\depth R/I=1$ and $\dim R/I=2$, then $I$ is a
 non-Cohen-Macaulay radical ideal. As $I$ is a square-free ideal, $R/I$ is
  $F$-pure but this is not the case for $R/J$, because it is not reduced.
  On the other hand by virtue of Hartshorne \cite{Ha} the ideal $J$ is
  set-theoretically Complete intersection and then it is set-theoretically Cohen-Macaulay.
 \end{remark}


\end{document}